\theoremstyle{definition}
\newtheorem{definition}{Definition}[section]
\newtheorem{remark}{Remark}[section]
\theoremstyle{plain}
\newtheorem{theorem}[definition]{Theorem}
\newtheorem{lemma}[definition]{Lemma}
\newtheorem{proposition}[definition]{Proposition}
\newtheorem{corollary}[definition]{Corollary}
\DeclareMathOperator{\T}{\mathbb T}
\DeclareMathOperator{\Z}{\mathbb Z}
\DeclareMathOperator{\sigt}{\sigma (t)}
\DeclareMathOperator{\rt}{\rho (t)}
\DeclareMathOperator{\fd}{{\it f}^\Delta (t)}
\DeclareMathOperator{\fs}{{\it f}^\sigma (t)}
\DeclareMathOperator{\Fd}{{\it F}^\Delta (t)}
\begin{document}

\title[A new generalization of Ostrowski type inequality on time scales]
{A new generalization of Ostrowski type inequality on time scales}

\author[W. J. Liu]{Wenjun Liu}
\address[W. J. Liu]{College of Mathematics and Physics\\
Nanjing University of Information Science and Technology \\
Nanjing 210044, China} \email{\href{mailto: W. J. Liu
<wjliu@nuist.edu.cn>}{wjliu@nuist.edu.cn}}

\author[Q. A. Ng\^{o}]{Qu\^{o}\hspace{-0.5ex}\llap{\raise 1ex\hbox{\'{}}}\hspace{0.5ex}c Anh Ng\^{o}}
\address[Q. A. Ng\^{o}]{Department of Mathematics, Mechanics and Informatics\\
College of Science\\ Vi\d{\^{e}}t Nam National University\\ H\`{a}
N\d{\^{o}}i, Vi\d{\^{e}}t Nam} \email{\href{mailto: Q. A. Ng\^{o}
<bookworm\_vn@yahoo.com>}{bookworm\_vn@yahoo.com}}

\author[W. B. Chen]{Wenbing Chen}
\address[W. B. Chen]{College of Mathematics and Physics\\
Nanjing University of Information Science and Technology \\
Nanjing 210044, China} \email{\href{mailto: W. B. Chen
<chenwb@nuist.edu.cn>}{chenwb@nuist.edu.cn}}

\subjclass[2000]{26D15; 39A10; 39A12; 39A13.}

\keywords{Ostrowski's inequality; generalization; time scales;
Simpson inequality; trapezoid inequality; mid-point inequality.}

\begin{abstract}
In this paper we first extend a generalization of Ostrowski type
inequality on time scales for functions whose derivatives are
bounded and then unify corresponding continuous and discrete
versions. We also point out some particular integral type
inequalities on time scales as special cases.

\end{abstract}

\thanks{This paper was typeset using \AmS-\LaTeX}

\maketitle

\section{introduction}

 In 1938, Ostrowski derived the following interesting integral inequality.

\begin{theorem}\label{t1}
Let $f: [a,b]\rightarrow \mathbb{R}$ be continuous on $[a,b]$ and differentiable in $(a,b)$ and its derivative $f': (a,b)\rightarrow \mathbb{R}$ is bounded in $(a,b)$, that is, $\|f'\|_\infty:=\sup\limits_{t\in(a,b)}|f'(x)|<
\infty$. Then for any $x\in [a,b]$, we have the inequality:
\begin{equation}\label{eq1}
\left|f(x)-\frac{1}{b-a}\int\limits_a^bf(t)dt\right|\leq \left(\frac{1}{4}+\frac{\big(x-\frac{a+b}{2}\big)^2}{(b-a)^2}\right)(b-a)\|f'\|_\infty.
\end{equation}
The inequality is sharp in the sense that the constant $\frac{1}{4}$ cannot be replaced by a smaller one.
\end{theorem}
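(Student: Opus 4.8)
The plan is to derive an exact integral representation for the deviation $f(x)-\frac{1}{b-a}\int_a^b f(t)\,dt$ and then bound it crudely using $\|f'\|_\infty$. First I would introduce the Peano-type kernel
\[
p(x,t)=\begin{cases} t-a, & a\le t\le x,\\ t-b, & x<t\le b,\end{cases}
\]
and establish the Montgomery identity
\[
f(x)-\frac{1}{b-a}\int_a^b f(t)\,dt=\frac{1}{b-a}\int_a^b p(x,t)f'(t)\,dt.
\]
The verification is a direct computation: split the integral on the right at $t=x$, integrate each piece by parts (using that $f$ is continuous on $[a,b]$ and differentiable on $(a,b)$), and observe that the boundary contributions combine to $(b-a)f(x)$ while the two leftover integrals reassemble $-\int_a^b f(t)\,dt$.

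Second, taking absolute values and pulling the supremum out of the integral gives
\[
\left|f(x)-\frac{1}{b-a}\int_a^b f(t)\,dt\right|\le \frac{\|f'\|_\infty}{b-a}\int_a^b |p(x,t)|\,dt.
\]
Since $p(x,t)=t-a\ge 0$ on $[a,x]$ and $p(x,t)=t-b\le 0$ on $[x,b]$, the remaining integral evaluates to $\frac{(x-a)^2+(b-x)^2}{2}$. The final step here is purely algebraic: I would rewrite this quantity, by completing the square about the midpoint $m=\frac{a+b}{2}$, as $\frac{(b-a)^2}{4}+(x-m)^2$, so that dividing by $b-a$ reproduces exactly the factor $\big(\frac14+\frac{(x-m)^2}{(b-a)^2}\big)(b-a)$ appearing in \eqref{eq1}.

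For the sharpness claim I would specialize to the midpoint $x=m$, where the bound reads $\frac14(b-a)\|f'\|_\infty$, and test it on $f(t)=|t-m|$. This choice has $\|f'\|_\infty=1$, satisfies $f(m)=0$, and $\frac{1}{b-a}\int_a^b|t-m|\,dt=\frac{b-a}{4}$, so both sides equal $\frac{b-a}{4}$ and equality is attained. I expect the main subtlety to lie in this sharpness step rather than in the estimate itself: the extremal $f(t)=|t-m|$ fails to be differentiable at the single point $t=m$, so to remain strictly within the hypotheses one must either smooth the corner or exhibit a sequence of admissible functions along which the ratio of the two sides tends to $1$, thereby confirming that $\frac14$ cannot be replaced by any smaller constant.
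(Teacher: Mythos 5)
Your proof is correct and is essentially the paper's own method: the paper states Theorem \ref{t1} without proof as classical background, but your Montgomery identity with the kernel $p(x,t)$ is exactly the $\lambda=0$, $\T=\mathbb{R}$ specialization of the paper's Lemma \ref{le1} and of the kernel estimate in the proof of Theorem \ref{th4}. One small simplification for the sharpness step: instead of smoothing the corner of $\left|t-\frac{a+b}{2}\right|$, you can test the inequality with the admissible function $f(t)=t$ at $x=b$, where both sides equal $\frac{b-a}{2}$; since the term $\frac{\big(x-\frac{a+b}{2}\big)^2}{(b-a)^2}$ already contributes exactly $\frac14$ at $x=b$, replacing the first $\frac14$ by any $c<\frac14$ would give $\frac{b-a}{2}\le \left(c+\frac14\right)(b-a)<\frac{b-a}{2}$, a contradiction.
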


For some extensions, generalizations and similar results,  see
\cite{dcr, l1, l3, l4, mpf, mpf2} and references therein.

The development of the theory of time scales was initiated by Hilger
\cite{h1988} in 1988 as a theory capable to contain both difference
and differential calculus in a consistent way. Since then, many
authors have studied the theory of certain integral inequalities on
time scales. For example, we refer the reader to \cite{abp2001,
bm2007, bm2008, OSY, wyy}. In \cite{bm2008}, Bohner and Matthews
established  the following so-called Ostrowski's inequality on time
scales.

\begin{theorem}[See \cite{bm2008}, Theorem 3.5]\label{th2}
Let $a, b, s, t\in \T$, $a<b$ and $f: [a, b]\rightarrow \mathbb{R}$ be differentiable. Then
\begin{equation}\label{eq2}
 \left|f(t)-\frac{1}{b-a}\int\limits_a^b f^\sigma(s)\Delta  s\right|   \leq \frac{M}{b-a}\Big(h_2(t,a)+h_2(t,b)\Big),
\end{equation}
where $M=\sup\limits_{a<t<b}|f^\Delta(t)|.$ This inequality is sharp
in the sense that the right-hand side of (\ref{eq2}) cannot be
replaced by a smaller one.
\end{theorem}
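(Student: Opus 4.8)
The plan is to reduce the estimate to a Montgomery-type identity on time scales and then to bound the resulting kernel. First I would introduce the Peano kernel
\[
p(t,s)=\begin{cases} s-a, & a\le s<t,\\ s-b, & t\le s\le b,\end{cases}
\]
and establish the time-scale Montgomery identity
\[
f(t)=\frac{1}{b-a}\int_a^b f^\sigma(s)\,\Delta s+\frac{1}{b-a}\int_a^b p(t,s)\,f^\Delta(s)\,\Delta s .
\]
To obtain this identity I would split $\int_a^b p(t,s)f^\Delta(s)\,\Delta s$ as $\int_a^t(s-a)f^\Delta(s)\,\Delta s+\int_t^b(s-b)f^\Delta(s)\,\Delta s$ and integrate each piece by parts using the rule $\int_a^b u(s)v^\Delta(s)\,\Delta s=[uv]_a^b-\int_a^b u^\Delta(s)v^\sigma(s)\,\Delta s$ with $v=f$ and $u^\Delta\equiv 1$. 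The integrated boundary terms give $(t-a)f(t)+(b-t)f(t)=(b-a)f(t)$, while the remaining integrals combine to $-\int_a^b f^\sigma(s)\,\Delta s$, which is exactly the identity after dividing by $b-a$.

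Once the identity is in hand, the inequality follows quickly. I would subtract the mean, take absolute values, and apply the triangle inequality together with the bound $|f^\Delta(s)|\le M$ to obtain
\[
\left|f(t)-\frac{1}{b-a}\int_a^b f^\sigma(s)\,\Delta s\right|\le\frac{M}{b-a}\int_a^b|p(t,s)|\,\Delta s .
\]
It then remains to evaluate $\int_a^b|p(t,s)|\,\Delta s$. Since $s-a\ge 0$ on $[a,t)$ and $s-b\le 0$ on $[t,b]$, this integral equals $\int_a^t(s-a)\,\Delta s+\int_t^b(b-s)\,\Delta s$. Using $h_1(s,a)=s-a$ together with the defining relation $h_2^\Delta(\cdot,a)=h_1(\cdot,a)$ and $h_2(a,a)=0$, the first integral is $h_2(t,a)$; applying the same reasoning at the base point $b$ gives $\int_t^b(b-s)\,\Delta s=h_2(t,b)$. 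Substituting yields the claimed bound $\frac{M}{b-a}\big(h_2(t,a)+h_2(t,b)\big)$.

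For sharpness I would exhibit an extremal pair. Taking $f(s)=s$, so that $f^\Delta\equiv 1$ and $M=1$, and evaluating at $t=a$ makes the kernel reduce to $p(a,s)=s-b$ on all of $[a,b]$; the left-hand side then equals $\frac{1}{b-a}|\int_a^b(s-b)\,\Delta s|=\frac{1}{b-a}h_2(a,b)$, which coincides with the right-hand side $\frac{1}{b-a}(h_2(a,a)+h_2(a,b))$ since $h_2(a,a)=0$. Hence the right-hand side of (\ref{eq2}) cannot be replaced by anything smaller. The main obstacle I anticipate is the careful derivation of the Montgomery identity: one must apply the time-scale integration-by-parts formula to a piecewise kernel and verify that the boundary contributions at the interior point $s=t$ match the two branches of $p(t,s)$, so that the terms cancel correctly irrespective of whether $t$ is right-dense or right-scattered. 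The subsequent estimation and the identification of the two integrals with $h_2(t,a)$ and $h_2(t,b)$ are routine once the sign of the kernel on each subinterval and the defining recursion for $h_2$ are used.
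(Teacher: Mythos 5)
Your proof is correct and follows essentially the same route as the paper: the paper quotes this result from Bohner--Matthews and recovers it as the $\lambda=0$ case of Theorem~\ref{th4}, whose proof rests on exactly your Montgomery identity (Lemma~\ref{le1} with $\lambda=0$ gives your kernel $p(t,s)$), the same triangle-inequality bound, and the same identification of the two kernel integrals with $h_2(t,a)$ and $h_2(t,b)$. Your sharpness argument with $f(s)=s$ at $t=a$ is the mirror image of the paper's choice $t=b-\lambda\frac{b-a}{2}$ (i.e.\ $t=b$ when $\lambda=0$) and is equally valid.
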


More recently, the authors proved the Ostrowski-Gr\"{u}ss type
inequality on time scales \cite{ln}.

\begin{theorem}\label{th3}
Let $a, b, s, t\in \T$, $a<b$ and $f: [a, b]\rightarrow \mathbb{R}$ be differentiable. If $f^\Delta$ is rd-continuous and
$$\gamma\leq f^\Delta(t)\leq \Gamma,\ \ \ \forall\ t\in [a, b].$$
Then we have
\begin{equation}\label{eq3}
 \left|f(t)-\frac{1}{b-a}\int\limits_a^b f^\sigma(s)\Delta  s- \frac{f(b)-f(a)}{(b-a)^2}\Big( h_2(t,a)- h_2 (t, b) \Big)\right|
   \leq \frac{1}{4}(b-a)(\Gamma-\gamma)
\end{equation}
for all $t\in [a, b]$.
\end{theorem}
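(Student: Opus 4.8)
The plan is to reduce the left-hand side of \eqref{eq3} to a Chebyshev (Gr\"uss) functional in the pair $\big(p(t,\cdot),f^\Delta\big)$ and then estimate that functional. First I would invoke the Montgomery identity on time scales that underlies Theorem~\ref{th2}: with the kernel
\[
p(t,s)=\begin{cases} s-a, & a\le s<t,\\ s-b, & t\le s\le b,\end{cases}
\]
one has
\[
f(t)-\frac{1}{b-a}\int_a^b f^\sigma(s)\Delta s=\frac{1}{b-a}\int_a^b p(t,s)\,f^\Delta(s)\,\Delta s .
\]
This is precisely the identity that yields \eqref{eq2} after taking absolute values and bounding $|f^\Delta|\le M$, so I may take it as available here.

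Next I would evaluate the two ``weights.'' From the definition of $h_2$ together with $\int_a^t(s-a)\Delta s=h_2(t,a)$ and $\int_t^b(s-b)\Delta s=-h_2(t,b)$, I obtain $\int_a^b p(t,s)\Delta s=h_2(t,a)-h_2(t,b)$, while the fundamental theorem on time scales gives $\int_a^b f^\Delta(s)\Delta s=f(b)-f(a)$. Substituting both into the Montgomery identity shows that the quantity inside the absolute value in \eqref{eq3} is exactly the Chebyshev functional
\[
\mathcal T:=\frac{1}{b-a}\int_a^b p(t,s)f^\Delta(s)\Delta s-\Big(\frac{1}{b-a}\int_a^b p(t,s)\Delta s\Big)\Big(\frac{1}{b-a}\int_a^b f^\Delta(s)\Delta s\Big),
\]
so the whole problem collapses to the Gr\"uss-type bound $|\mathcal T|\le\frac14(b-a)(\Gamma-\gamma)$.

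To estimate $\mathcal T$ I would use the Korkine identity on time scales,
\[
\mathcal T=\frac{1}{2(b-a)^2}\int_a^b\!\int_a^b\big(p(t,s)-p(t,r)\big)\big(f^\Delta(s)-f^\Delta(r)\big)\,\Delta s\,\Delta r,
\]
followed by the Cauchy--Schwarz inequality for the $\Delta$-integral, giving $|\mathcal T|\le\sqrt{\mathcal T(p,p)}\,\sqrt{\mathcal T(f^\Delta,f^\Delta)}$. Each factor is a variance that I would control by the elementary range estimate: integrating the nonnegative quantity $(f^\Delta-\gamma)(\Gamma-f^\Delta)$ yields $\mathcal T(f^\Delta,f^\Delta)\le\frac14(\Gamma-\gamma)^2$, and since $p(t,s)$ takes values in the interval $[t-b,\,t-a]$ of length $b-a$, the identical argument gives $\mathcal T(p,p)\le\frac14(b-a)^2$. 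Multiplying the two bounds produces exactly $\frac14(b-a)(\Gamma-\gamma)$, which is \eqref{eq3}.

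The kernel integral and the two variance estimates are routine once set up. The step needing the most care is the passage from the Montgomery identity to $\mathcal T$, namely verifying that the correction term $\frac{f(b)-f(a)}{(b-a)^2}\big(h_2(t,a)-h_2(t,b)\big)$ matches the product of the two means \emph{exactly}, including the sign coming from $\int_t^b(s-b)\Delta s=-h_2(t,b)$. Equally, the genuine obstacle is justifying the Korkine identity and Cauchy--Schwarz in the $\Delta$-integral setting: unlike the continuous case $h_2$ is not symmetric, so one must apply Fubini on the time scale rather than on an interval and confirm that the double-integral manipulations remain valid for rd-continuous $f^\Delta$.
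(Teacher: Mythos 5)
The paper does not actually prove Theorem~\ref{th3} here: it is quoted as a known result from \cite{ln}, so there is no internal proof to compare against. Judged on its own, your argument is correct and is essentially the standard route to this Ostrowski--Gr\"uss inequality. The Montgomery identity with the Bohner--Matthews kernel $p(t,s)$ is exactly the $\lambda=0$ case of Lemma~\ref{le1}; your computations $\int_a^t(s-a)\Delta s=h_2(t,a)$, $\int_t^b(s-b)\Delta s=-h_2(t,b)$ and $\int_a^b f^\Delta(s)\Delta s=f(b)-f(a)$ are right, so the quantity inside the absolute value in \eqref{eq3} is indeed the Chebyshev functional $\mathcal T\bigl(p(t,\cdot),f^\Delta\bigr)$, and since $p(t,\cdot)$ ranges over $[t-b,t-a]$, an interval of length $b-a$, the Gr\"uss bound gives $\tfrac14(b-a)(\Gamma-\gamma)$. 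Where you differ from the source is that you re-derive the Gr\"uss inequality from scratch via the Korkine identity, Cauchy--Schwarz for the double $\Delta$-integral, and the two variance estimates; the reference \cite{ln} simply invokes the Gr\"uss inequality on time scales already established in \cite{bm2007}, which shortens the proof and outsources precisely the Fubini-type justification you correctly identify as the delicate step. Your longer route is self-contained but should acknowledge two small technical points: Fubini for iterated $\Delta$-integrals of bounded integrands needs a citation (it is available in the time-scales literature), and $p(t,\cdot)$ fails to be rd-continuous at $s=t$ when $t$ is dense, so one should either work with the integral split as $\int_a^t+\int_t^b$ (as in the proof of Theorem~\ref{th2} in \cite{bm2008}) or note that piecewise rd-continuity and boundedness suffice for all the manipulations. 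Neither point is a genuine gap.
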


In the present paper, by introducing a parameter, we first extend
a generalization of Ostrowski type inequality on time scales for
functions whose derivatives are bounded and then unify
corresponding continuous and discrete versions. We also point out
some particular integral type inequalities on time scales as
special cases.

\section{Time scales essentials}

Now we briefly introduce the time scales theory and refer the reader
to Hilger \cite{h1988} and the books \cite{bp2001, bp2003, LSK} for
further details.

\begin{definition}
{\it A time scale} $\T$ is an arbitrary nonempty closed subset of real numbers.
\end{definition}


\begin{definition}
 For $t \in \T$, we define the {\it forward jump operator} $\sigma : \T \to \T$ by
$
\sigt = \inf \left\{ {s \in \T:s > t} \right\},
$
while the {\it backward jump operator} $\rho : \T \to \T$ is defined by
$
\rt = \sup \left\{ {s \in \T:s < t} \right\}.
$
If $\sigt >t $, then we say that $t$ is {\it right-scattered}, while if $\rt < t$ then we say that $t$ is {\it left-scattered}.
\end{definition}

Points that are right-scattered and left-scattered at the same time
are called isolated. If $\sigt = t$, the $t$ is called {\it
right-dense}, and if $\rt = t$ then $t$ is called {\it left-dense}.
Points that are both right-dense and left-dense are called dense.

\begin{definition}
Let $t \in \T$, then two mappings $\mu ,\nu :\T \to \left[ {0, +
\infty } \right)$ satisfying $$ \mu \left( t \right): = \sigt - t,
 \, \nu \left( t \right): = t - \rt$$ are called the {\it
graininess functions}.
\end{definition}

We now introduce the set $\T^\kappa$ which is derived from the time
scales $\T$ as follows. If $\T$ has a left-scattered maximum $t$,
then $\T^\kappa := \T -\{t\}$, otherwise $\T^\kappa := \T$.
Furthermore for a function $f : \T \to \mathbb R$, we define the
function $f^\sigma : \T \to \mathbb R$ by $\fs = f(\sigma(t))$ for
all $t \in \T$.

\begin{definition}
Let $f : \T \to \mathbb R$ be a function on time scales. Then for $t \in \T^\kappa$, we define $\fd$ to be the number, if one exists, such that for all $\varepsilon >0$ there is a neighborhood $U$ of $t$ such that for all $s \in U$
\[
\left| {\fs - f\left( s \right) - \fd \left( {\sigt - s} \right)}
\right| \leq \varepsilon \left| {\sigt - s} \right|.
\]
We say that $f$ is $\Delta$-differentiable on $\T^\kappa$ provided
$\fd$ exists for all $t \in \T^\kappa$.
\end{definition}


\begin{definition} A mapping $f : \T \to \mathbb R$ is called {\it rd-continuous} (denoted by $C_{rd}$) provided if it satisfies
  \begin{enumerate}
   \item $f$ is continuous at each right-dense point or maximal element of $\T$.
   \item The left-sided limit $\mathop {\lim }\limits_{s \to t - } f\left( s \right) = f\left( {t - } \right)$ exists at each left-dense point $t$ of $\T$.
  \end{enumerate}
\end{definition}

\begin{remark}
It follows from Theorem 1.74 of Bohner and Peterson \cite{bp2001} that every rd-continuous function has an anti-derivative.
\end{remark}

\begin{definition} A function $F : \T \to \mathbb R$ is called a $\Delta$-antiderivative
of $f : \T \to \mathbb R$ provided $\Fd =f(t)$ holds for all $t \in \T^\kappa$.
Then the $\Delta$-integral of $f$ is defined by
\[
\int\limits_a^b {f\left( t \right)\Delta t} = F\left( b \right) - F\left( a \right).
\]
\end{definition}

\begin{proposition} \label{pro1}
Let $f, g$ be rd-continuous, $a, b, c\in \mathbb{T}$ and $\alpha, \beta\in \mathbb{R}$. Then
\begin{enumerate}
  \item $\int\limits_a^b {[\alpha f(t)+\beta g(t)]\Delta t} = \alpha\int\limits_a^b {f(t)\Delta t}+\beta\int\limits_a^b {g(t)\Delta t}, $
  \item $\int\limits_a^b {f(t)\Delta t}=-\int\limits_b^a {f(t)\Delta t},$
  \item $\int\limits_a^b {f(t)\Delta t}=\int\limits_a^c {f(t)\Delta t}+\int\limits_c^b {f(t)\Delta t},$
  \item $\int\limits_a^b {f(t)g^\Delta(t)\Delta t}=(fg)(b)-(fg)(a)-\int\limits_a^b {f^\Delta(t)g(\sigma(t))\Delta t},$
  \item $\int\limits_a^a {f(t)\Delta t}=0.$
 \end{enumerate}
\end{proposition}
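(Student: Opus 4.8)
The plan is to reduce every item to the defining relation of the $\Delta$-integral, namely $\int_a^b f(t)\Delta t = F(b)-F(a)$ for any $\Delta$-antiderivative $F$ of $f$, whose existence for rd-continuous $f$ is guaranteed by the Remark following the definition of rd-continuity. Properties (2), (3) and (5) are then immediate telescoping identities: fixing such an $F$, I read off $\int_a^b f\,\Delta t = F(b)-F(a) = -(F(a)-F(b)) = -\int_b^a f\,\Delta t$ for (2); $(F(c)-F(a)) + (F(b)-F(c)) = F(b)-F(a)$ for (3); and $F(a)-F(a)=0$ for (5). None of these requires an estimate. For the linearity (1), the first step is to note that the $\Delta$-derivative is additive and homogeneous: if $F^\Delta = f$ and $G^\Delta = g$, a direct check against the $\varepsilon$-neighborhood definition shows $(\alpha F + \beta G)^\Delta = \alpha f + \beta g$, so $\alpha F + \beta G$ is a $\Delta$-antiderivative of $\alpha f + \beta g$; evaluating $(\alpha F + \beta G)(b) - (\alpha F + \beta G)(a)$ and regrouping yields (1).

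The substantive item is the integration-by-parts formula (4), and the hard part will be establishing the time-scales product rule $(fg)^\Delta(t) = f^\Delta(t)\,g(\sigma(t)) + f(t)\,g^\Delta(t)$. I would prove this directly from the definition of $\Delta$-differentiability. For $s$ in a neighborhood of $t$, write the decomposition $(fg)^\sigma - (fg)(s) = g(\sigma(t))\big(f^\sigma - f(s)\big) + f(s)\big(g^\sigma - g(s)\big)$, then substitute the defining estimates $f^\sigma - f(s) = f^\Delta(t)(\sigma(t)-s) + R_f$ and $g^\sigma - g(s) = g^\Delta(t)(\sigma(t)-s) + R_g$, where $|R_f|\le \varepsilon_1|\sigma(t)-s|$ and $|R_g|\le\varepsilon_2|\sigma(t)-s|$. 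Collecting terms, the coefficient of $\sigma(t)-s$ becomes $g(\sigma(t))f^\Delta(t)+f(t)g^\Delta(t)$ after replacing $f(s)$ by $f(t)$, and the residual splits into $g(\sigma(t))R_f$, $f(s)R_g$, and $(f(s)-f(t))g^\Delta(t)(\sigma(t)-s)$.

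The main care here is bookkeeping: each residual term must be shown to be $o(|\sigma(t)-s|)$. The first two are controlled by $|g(\sigma(t))|\varepsilon_1$ and $|f(s)|\varepsilon_2$ times $|\sigma(t)-s|$, hence small once $\varepsilon_1,\varepsilon_2$ are small; the third is controlled using continuity of $f$ at $t$ (which follows from $\Delta$-differentiability), so that $|f(s)-f(t)|$ can be made as small as desired for $s$ near $t$. Thus the total residual is bounded by $\varepsilon|\sigma(t)-s|$ for an arbitrary $\varepsilon>0$, which is exactly the requirement for $(fg)^\Delta(t)$ to equal $f^\Delta(t)\,g(\sigma(t)) + f(t)\,g^\Delta(t)$. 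I expect this error estimate to be the only genuine obstacle in the whole proposition.

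With the product rule in hand, the final step integrates it over $[a,b]$. Since $fg$ is a $\Delta$-antiderivative of $(fg)^\Delta$, the defining relation gives $\int_a^b (fg)^\Delta(t)\,\Delta t = (fg)(b) - (fg)(a)$, while property (1) splits the left-hand side as $\int_a^b f^\Delta(t)\,g(\sigma(t))\,\Delta t + \int_a^b f(t)\,g^\Delta(t)\,\Delta t$. Solving for the second integral produces $\int_a^b f(t)g^\Delta(t)\,\Delta t = (fg)(b)-(fg)(a)-\int_a^b f^\Delta(t)g(\sigma(t))\,\Delta t$, which is precisely (4), completing the proof.
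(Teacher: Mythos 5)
Your proof is correct, but note that the paper itself offers no proof of this proposition: it is quoted as a package of standard facts from the time-scales literature (Bohner and Peterson, \emph{Dynamic Equations on Time Scales}), so there is no argument in the paper to compare against. Your route is the standard textbook one, and every step checks out. Items (1), (2), (3), (5) do reduce to the antiderivative definition exactly as you say, with existence of antiderivatives supplied by the rd-continuity remark. For (4), your decomposition
\[
(fg)(\sigma(t)) - (fg)(s) = g(\sigma(t))\bigl(f(\sigma(t))-f(s)\bigr) + f(s)\bigl(g(\sigma(t))-g(s)\bigr)
\]
is the right one for producing the form $(fg)^\Delta = f^\Delta g^\sigma + f g^\Delta$ that matches the stated identity, and your bookkeeping of the three residual terms is sound: the terms $g(\sigma(t))R_f$ and $f(s)R_g$ are handled by the defining estimates (using local boundedness of $f$ near $t$), and the term $(f(s)-f(t))g^\Delta(t)(\sigma(t)-s)$ is handled by continuity of $f$ at $t$, which indeed follows from $\Delta$-differentiability (take $s=t$ in the defining inequality to get $f(\sigma(t))-f(t)=f^\Delta(t)\mu(t)$, then let $s\to t$). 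Two small remarks: the proposition's hypothesis ``$f,g$ rd-continuous'' is too weak for item (4), which needs $f$ and $g$ to be $\Delta$-differentiable with rd-continuous derivatives so that both integrands are integrable --- your proof correctly assumes this, and the imprecision is in the statement, not in your argument; and in (1) you should also observe that $\alpha f+\beta g$ is again rd-continuous so that the left-hand integral exists independently, though exhibiting the explicit antiderivative $\alpha F+\beta G$ already settles this.
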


\begin{definition}\label{de7}
Let $h_k : \T^2 \to \mathbb R$, $k \in \mathbb N_0$ be defined by
\[
h_0 \left( {t,s} \right) = 1 \quad {\text{ for all }} \quad s,t \in
\T
\]
and then recursively by
\[
h_{k + 1} \left( {t,s} \right) = \int\limits_s^t {h_k \left( {\tau
,s} \right)\Delta \tau } \quad {\text{ for all }} \quad s,t \in \T.
\]
\end{definition}

%

\section{ The  Ostrowski type inequality on time scales}

Our main result reads as follow.

\begin{theorem}\label{th4}
Let $a, b, s, t\in \T$, $a<b$ and $f: [a, b]\rightarrow \mathbb{R}$
be  differentiable. Then
\begin{equation} \label{eq4}
\begin{split}
\Bigg|
(1-\lambda)f(t)+\lambda\frac{f(a)+f(b)}{2}&-\frac{1}{b-a}\int\limits_a^b
f^\sigma(s)\Delta
 s \Bigg| \\
\leq \frac{M}{b-a} \Bigg( &h_2 \left( {a,a + \lambda \frac{{b - a}}
{2}} \right) + h_2 \left( {t,a + \lambda \frac{{b - a}}
{2}} \right) \\
+ &h_2 \left( {t,b - \lambda \frac{{b - a}}
{2}} \right) + h_2 \left( {b,b - \lambda \frac{{b - a}} {2}}
\right) \Bigg)
\end{split}
\end{equation}
for all $\lambda\in [0, 1]$ and $t\in [a+\lambda \frac{b-a}{2}, b-\lambda \frac{b-a}{2}]\cap \T$, where
\[
M:=\sup\limits_{a<t<b}|f^{\Delta}(t)|<\infty.
\]
This inequality is sharp provided
\begin{equation}\label{eq5}
\frac{\lambda }{2}a(b-a) + \frac{{\lambda ^2 }}{4}\left( {b - a}
\right)^2 \leqslant \int\limits_a^{a + \lambda \frac{{b - a}} {2}}
{s\Delta s}.
\end{equation}
\end{theorem}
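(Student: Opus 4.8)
The plan is to reduce \eqref{eq4} to a single Montgomery-type identity on time scales by integrating a suitable Peano kernel against $f^\Delta$. Abbreviating $\xi := a+\lambda\frac{b-a}{2}$ and $\eta := b-\lambda\frac{b-a}{2}$, so that the admissible range for $t$ is exactly $[\xi,\eta]\cap\T$, I would introduce the kernel
\[
p(t,s) = \begin{cases} s-\xi, & s\in[a,t),\\ s-\eta, & s\in[t,b], \end{cases}
\]
and assert the identity
\[
\int\limits_a^b p(t,s)\,f^\Delta(s)\,\Delta s = (b-a)\Big[(1-\lambda)f(t)+\lambda\tfrac{f(a)+f(b)}{2}\Big]-\int\limits_a^b f^\sigma(s)\,\Delta s .
\]
This is the natural time-scales analogue of the kernel producing the generalized (parameter-$\lambda$) Ostrowski functional, specializing to the Bohner--Matthews kernel of Theorem~\ref{th2} when $\lambda=0$.

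To prove the identity I would split the integral at $s=t$ and apply the integration-by-parts formula of Proposition~\ref{pro1}(4) on each subinterval, putting the linear factor ($s-\xi$ on $[a,t)$, respectively $s-\eta$ on $[t,b]$) in the role of the first factor and the original $f$ in the role of the antiderivative of $f^\Delta$. Since each linear factor has $\Delta$-derivative equal to $1$, the formula returns $\int f^\sigma(s)\,\Delta s$ together with boundary terms. Collecting these, the coefficient of $f(t)$ is $(t-\xi)-(t-\eta)=\eta-\xi=(1-\lambda)(b-a)$, while $f(a)$ and $f(b)$ each acquire the factor $\lambda\frac{b-a}{2}$; after dividing by $b-a$ this shows that the expression inside the absolute value in \eqref{eq4} equals $\frac{1}{b-a}\int_a^b p(t,s)f^\Delta(s)\,\Delta s$.

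Taking absolute values and using $|f^\Delta|\le M$ bounds this by $\frac{M}{b-a}\int_a^b |p(t,s)|\,\Delta s$, so it remains to identify the latter integral with the four $h_2$-terms. Here the hypothesis $t\in[\xi,\eta]\cap\T$ is exactly what is needed: it forces the sign change of $p$ on $[a,t)$ to occur at $\xi$ and that on $[t,b]$ at $\eta$. Splitting the absolute value at $\xi$ and $\eta$ and invoking Definition~\ref{de7} together with Proposition~\ref{pro1}(2) — so that $\int_\xi^t(s-\xi)\Delta s=h_2(t,\xi)$, $\int_a^\xi(\xi-s)\Delta s=h_2(a,\xi)$, and symmetrically on $[t,b]$ — I would obtain
\[
\int\limits_a^b |p(t,s)|\,\Delta s = h_2(a,\xi)+h_2(t,\xi)+h_2(t,\eta)+h_2(b,\eta),
\]
which is precisely the bracketed sum in \eqref{eq4}.

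For the sharpness claim I would test the inequality on $f(s)=s-a$, so that $f^\Delta\equiv 1$ and $M=1$, and evaluate at the left endpoint $t=\xi$, where $h_2(t,\xi)=0$; comparing the two sides directly, the discrepancy collapses to $\Delta$-integrals of the form $\int_a^\xi s\,\Delta s$, and demanding that $p$ retain a fixed sign on $[a,\xi]$ so that the triangle inequality is saturated turns out to be exactly condition \eqref{eq5}. The main obstacle I anticipate lies in this last point: on a general time scale the $\sigma$-shift appearing in $\int_a^b f^\sigma(s)\,\Delta s$ interacts with the discrete jumps of the kernel, so establishing that equality is genuinely attained — not merely that the bound holds — is delicate, and this is precisely what necessitates the extra hypothesis \eqref{eq5}. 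By contrast, the identity and the $h_2$-bookkeeping of the earlier steps are routine once the correct kernel has been written down.
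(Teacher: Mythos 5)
Your treatment of the main inequality coincides with the paper's: the kernel $p(t,s)$ you introduce is exactly the $K(t,s)$ of Lemma~\ref{le1} (the Generalized Montgomery Identity), the integration-by-parts bookkeeping via Proposition~\ref{pro1}(4) is the paper's proof of that lemma verbatim, and the splitting of $\int_a^b|K(t,s)|\,\Delta s$ at $\xi=a+\lambda\frac{b-a}{2}$ and $\eta=b-\lambda\frac{b-a}{2}$ into the four $h_2$-terms is precisely how the paper derives \eqref{eq4}. That part is correct and needs no further comment.

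The sharpness argument, however, has a genuine gap as sketched. First, you evaluate at the \emph{left} endpoint $t=\xi$, whereas the paper evaluates at $t=\eta=b-\lambda\frac{b-a}{2}$. These are not interchangeable on a general time scale: with $f(s)=s$ (your $f(s)=s-a$ is the same functional, since adding a constant changes neither side), the choice $t=\eta$ kills $h_2(t,\eta)$ and, after expressing $h_2(a,\xi)$, $h_2(\eta,\xi)$, $h_2(b,\eta)$ through $\int s\,\Delta s$, the comparison of the two sides reduces exactly to the quantity $\int_a^{\xi}s\,\Delta s$ appearing in \eqref{eq5}. Your choice $t=\xi$ instead leaves the term $h_2(\xi,\eta)=\int_\eta^\xi(s-\eta)\Delta s$, and the analogous computation produces a condition on $\int_\eta^{b}s\,\Delta s$ --- a mirror-image hypothesis that is not equivalent to \eqref{eq5} on an asymmetric time scale. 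Second, your explanation of where \eqref{eq5} enters (``demanding that $p$ retain a fixed sign on $[a,\xi]$'') misidentifies the mechanism: $p(t,s)=s-\xi\le 0$ on $[a,\xi)$ automatically, so no condition arises there. What \eqref{eq5} actually does in the paper is ensure that the \emph{signed} left-hand expression (after dropping the absolute value via $|x|\ge x$) dominates the explicitly computed right-hand side; combined with \eqref{eq4} itself this forces equality. Without redoing the computation at the correct point $t=\eta$ and running this two-sided squeeze, the sharpness claim under hypothesis \eqref{eq5} is not established.
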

\begin{remark}
We note that the condition \eqref{eq5} is trivial if $\lambda = 0$.
\end{remark}

To prove Theorem \ref{th4}, we need the following Generalized Montgomery Identity.

\begin{lemma}[Generalized Montgomery Identity]\label{le1}
Under the assumptions of Theorem \ref{th4}, we have
\[
(1-\lambda)f(t)+\lambda\frac{f(a)+f(b)}{2}=\frac{1}{b-a}\int\limits_a^b
f^\sigma(s)\Delta
 s +
\frac{1}{b-a}\int\limits_a^b K(t,s)f^{\Delta}(s)\Delta s,
\]
where
\begin{equation}\label{eq6}
 K(t,s)=\left\{ \begin{array}{ll}
 \displaystyle s-\left(a+\lambda\frac{b-a}{2}\right),\ \ &s\in [a, t), \hfill \medskip\\
  \displaystyle  s-\left(b-\lambda\frac{b-a}{2}\right),\ \ &s\in [t, b]. \hfill
\end{array} \right.
\end{equation}
\end{lemma}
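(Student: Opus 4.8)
The plan is to verify the identity directly, by computing the kernel integral $\frac{1}{b-a}\int_a^b K(t,s)\,f^\Delta(s)\,\Delta s$ and showing that it reproduces the left-hand side minus the averaged integral $\frac{1}{b-a}\int_a^b f^\sigma(s)\,\Delta s$. Since $K(t,s)$ is defined piecewise by \eqref{eq6}, the first step is to split the integral at $s=t$:
\[
\int_a^b K(t,s)\,f^\Delta(s)\,\Delta s = \int_a^t \Bigl(s - a - \lambda\tfrac{b-a}{2}\Bigr) f^\Delta(s)\,\Delta s + \int_t^b \Bigl(s - b + \lambda\tfrac{b-a}{2}\Bigr) f^\Delta(s)\,\Delta s.
\]

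Second, I would integrate each piece by parts using Proposition \ref{pro1}(4). The crucial observation is that on each subinterval the factor multiplying $f^\Delta$ is an affine function of $s$, whose $\Delta$-derivative is identically $1$ (the identity map satisfies $(\mathrm{id})^\Delta \equiv 1$ on any time scale). Applying the formula with this affine factor taking the place of the function that is differentiated on the right-hand side of Proposition \ref{pro1}(4), each surviving integral remainder becomes $-\int f^\sigma(s)\,\Delta s$ over the corresponding subinterval; by additivity (Proposition \ref{pro1}(3)) these combine to $-\int_a^b f^\sigma(s)\,\Delta s$. This is exactly the mechanism that produces the averaged term in the statement.

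Third, I would collect the boundary terms. At the interior endpoint $s=t$ the two affine factors differ, and the contributions telescope: the coefficient of $f(t)$ is $\bigl(t - a - \lambda\tfrac{b-a}{2}\bigr) - \bigl(t - b + \lambda\tfrac{b-a}{2}\bigr) = (1-\lambda)(b-a)$. At $s=a$ the first piece contributes $\lambda\tfrac{b-a}{2}f(a)$ and at $s=b$ the second contributes $\lambda\tfrac{b-a}{2}f(b)$. Hence
\[
\int_a^b K(t,s)\,f^\Delta(s)\,\Delta s = (1-\lambda)(b-a)\,f(t) + \lambda\tfrac{b-a}{2}\bigl(f(a)+f(b)\bigr) - \int_a^b f^\sigma(s)\,\Delta s,
\]
and dividing by $b-a$ and rearranging yields the asserted identity.

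The computation is essentially routine bookkeeping; the only point requiring genuine care is the correct use of the time-scale integration-by-parts formula, in which the surviving integral involves $f^\sigma$ rather than $f^\Delta$. One should also note that $f$ being $\Delta$-differentiable (hence continuous) guarantees that the value $f(t)$ used at the shared endpoint $s=t$ is unambiguous across the two pieces, so that the telescoping of the boundary terms is legitimate and no spurious jump contribution appears.
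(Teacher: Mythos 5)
Your proposal is correct and follows essentially the same route as the paper: split the kernel integral at $s=t$ according to the piecewise definition of $K$, integrate each affine piece by parts via Proposition \ref{pro1}(4) so that the remainder integrals combine to $-\int_a^b f^\sigma(s)\,\Delta s$, and collect the boundary terms, which telescope at $s=t$ to give the coefficient $(1-\lambda)(b-a)$ of $f(t)$ and contribute $\lambda\frac{b-a}{2}f(a)$ and $\lambda\frac{b-a}{2}f(b)$ at the outer endpoints. All the coefficients you compute agree with the paper's, so nothing further is needed.
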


\begin{proof}
Integrating by parts and applying Property \ref{pro1}, we have
\begin{align*}
 &\int\limits_a^b K(t,s)f^{\Delta}(s)\Delta
s\\
 = &\int\limits_a^t \left(s-\left(a+\lambda\frac{b-a}{2}\right)\right)f^{\Delta}(s)\Delta
s+\int\limits_t^b
\left(s-\left(b-\lambda\frac{b-a}{2}\right)\right)f^{\Delta}(s)\Delta
s\\
 = &
 \left(t-\left(a+\lambda\frac{b-a}{2}\right)\right)f(t)+\frac{\lambda}{2}(b-a)f(a)-\int\limits_a^t f^{\sigma}(s)\Delta
s\\&-\left(t-\left(b-\lambda\frac{b-a}{2}\right)\right)f(t)+\frac{\lambda}{2}(b-a)f(b)-\int\limits_t^b
f^{\sigma}(s)\Delta s
 \\
 = & (b-a)\left((1-\lambda)f(t)+\lambda\frac{f(a)+f(b)}{2}\right)-\int\limits_a^b f^{\sigma}(s)\Delta
s,
\end{align*}
from which we get the desired identity.
\end{proof}

\begin{corollary}[Continuous case]\label{co1}
Let $\T= \mathbb{R}$. Then
\begin{equation}\label{eq7}
(1-\lambda)f(t)+\lambda\frac{f(a)+f(b)}{2}=\frac{1}{b-a}\int\limits_a^b
f(s)\,{\rm d} s+ \frac{1}{b-a}\int\limits_a^b K(t,s)f'(s)\,{\rm d}
s.
\end{equation}
\end{corollary}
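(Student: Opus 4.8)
The plan is to obtain \eqref{eq7} as an immediate specialization of the Generalized Montgomery Identity (Lemma \ref{le1}) to the time scale $\T = \mathbb{R}$, so no fresh computation is required; the entire content is the collapse of the $\Delta$-calculus objects to their classical counterparts. First I would recall the standard reductions in this case: since $\mathbb{R}$ has no isolated points, every point is right-dense and the forward jump operator satisfies $\sigma(s) = s$ for all $s$; consequently $f^\sigma(s) = f(\sigma(s)) = f(s)$. Likewise the $\Delta$-derivative $f^\Delta$ coincides with the ordinary derivative $f'$, and the $\Delta$-integral $\int_a^b (\cdot)\,\Delta s$ reduces to the Riemann integral $\int_a^b (\cdot)\,{\rm d}s$. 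These are the standard facts of time-scale calculus for $\T=\mathbb{R}$, found for instance in \cite{bp2001}.

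With these identifications in hand, I would simply substitute them into the identity furnished by Lemma \ref{le1}. The left-hand side $(1-\lambda)f(t) + \lambda\frac{f(a)+f(b)}{2}$ is unaffected. On the right-hand side, the averaging term $\frac{1}{b-a}\int_a^b f^\sigma(s)\,\Delta s$ becomes $\frac{1}{b-a}\int_a^b f(s)\,{\rm d}s$, and the remainder term $\frac{1}{b-a}\int_a^b K(t,s)f^\Delta(s)\,\Delta s$ becomes $\frac{1}{b-a}\int_a^b K(t,s)f'(s)\,{\rm d}s$. The kernel $K(t,s)$ of \eqref{eq6} involves only the data $a$, $b$, $\lambda$ and the real variable $s$, so it transfers verbatim. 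Collecting these three pieces yields precisely \eqref{eq7}.

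In truth there is no analytical obstacle to overcome here: the result is a direct corollary, and the only step deserving explicit mention is the invocation of the reduction $\sigma = \mathrm{id}$ on $\mathbb{R}$, from which $f^\sigma=f$, $f^\Delta=f'$, and $\int (\cdot)\,\Delta s=\int (\cdot)\,{\rm d}s$ all follow at once. I would therefore present the proof as a single sentence applying Lemma \ref{le1} with $\T=\mathbb{R}$ and recording these substitutions.
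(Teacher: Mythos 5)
Your proposal is correct and matches the paper's (implicit) approach exactly: the corollary is obtained by specializing Lemma \ref{le1} to $\T=\mathbb{R}$, where $\sigma=\mathrm{id}$, $f^\sigma=f$, $f^\Delta=f'$, and the $\Delta$-integral reduces to the Riemann integral. The paper gives no separate proof, treating it as an immediate consequence just as you do.
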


\begin{remark}
This is the Montgomery identity in the continuous case, which can be
found in \cite{dcr}.
\end{remark}

\begin{corollary}[Discrete case]\label{co2}
Let $\T= \Z$, $a=0$, $b=n$, $s=j$, $t=i$ and $f(k)=x_k$. Then
\[
(1-\lambda)x_{i} +\lambda\frac{x_0+x_n}{2} =
\frac{1}{n}\sum\limits_{j = 1}^n {x_j } + \frac{1}
{{n}}\sum\limits_{j = 0}^{n - 1} {K\left( {i,j} \right)\Delta  x_j }
,
\]
where
\begin{align*}
K(i, 0)&=-\frac{n\lambda}{2},\\
K(1, j)&=j-\left(n-\frac{n\lambda}{2}\right) \ \ \mbox{for}\ \ 1\leq j\leq n-1,\\
K(n, j)&=j- \frac{n\lambda}{2}  \ \ \mbox{for}\ \ 0\leq j\leq n-1,\\
K(i,j)&=\left\{ \begin{array}{cl}
 j- \frac{n\lambda}{2},\ \ &j\in [0, i),\medskip \hfill \\
   j-\left(n-\frac{n\lambda}{2}\right),\ \ &j\in [i, n-1], \hfill
\end{array} \right.
\end{align*}
as we just need $1\leq i\leq n$ and $1\leq j\leq n-1.$
\end{corollary}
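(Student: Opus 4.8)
The plan is to obtain Corollary \ref{co2} as a direct specialization of the Generalized Montgomery Identity of Lemma \ref{le1} to the time scale $\T = \Z$. On $\Z$ one has the standard identifications $\sigma(t) = t+1$ and $\mu(t) = 1$, so that the delta derivative is the forward difference $f^\Delta(j) = f(j+1)-f(j) = \Delta x_j$, and for integers $a < b$ the delta integral collapses to a finite sum, $\int_a^b g(s)\Delta s = \sum_{s=a}^{b-1} g(s)$ (see \cite{bp2001}). Inserting the substitutions $a=0$, $b=n$, $s=j$, $t=i$, $f(k)=x_k$, the left-hand side of the identity is read off immediately as $(1-\lambda)f(t)+\lambda\frac{f(a)+f(b)}{2} = (1-\lambda)x_i + \lambda\frac{x_0+x_n}{2}$, so only the two integrals on the right need translating.

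First I would convert the averaged integral. Since $f^\sigma(j) = f(\sigma(j)) = f(j+1) = x_{j+1}$, the collapse of the delta integral and the index shift $j\mapsto j+1$ give
\[
\frac{1}{b-a}\int\limits_a^b f^\sigma(s)\Delta s = \frac{1}{n}\sum\limits_{j=0}^{n-1} x_{j+1} = \frac{1}{n}\sum\limits_{j=1}^{n} x_j,
\]
which is precisely the first sum in the statement. For the kernel term, using $f^\Delta(j) = \Delta x_j$ and the same collapse of the integral,
\[
\frac{1}{b-a}\int\limits_a^b K(t,s)f^\Delta(s)\Delta s = \frac{1}{n}\sum\limits_{j=0}^{n-1} K(i,j)\Delta x_j,
\]
yielding the second sum. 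Thus the displayed identity of the corollary follows term by term from Lemma \ref{le1}.

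It then remains to specialize the kernel \eqref{eq6}. With $a=0$ and $b=n$ the two branch thresholds $a+\lambda\frac{b-a}{2}$ and $b-\lambda\frac{b-a}{2}$ become $\frac{n\lambda}{2}$ and $n-\frac{n\lambda}{2}$, so over the relevant index range $0\leq j\leq n-1$ the kernel reads
\[
K(i,j)=\left\{\begin{array}{cl} j-\frac{n\lambda}{2}, & j\in[0,i),\medskip\\ j-\left(n-\frac{n\lambda}{2}\right), & j\in[i,n-1],\end{array}\right.
\]
which is the general formula stated. The three boundary formulas are merely evaluations of this: setting $j=0$ (which lies in the first branch whenever $i\geq 1$) gives $K(i,0)=-\frac{n\lambda}{2}$; taking $i=1$ forces every index $1\leq j\leq n-1$ into the second branch, producing $K(1,j)=j-(n-\frac{n\lambda}{2})$; and taking $i=n$ empties the second branch, so every $0\leq j\leq n-1$ lands in the first branch, producing $K(n,j)=j-\frac{n\lambda}{2}$. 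There is no genuine obstacle here, as the argument is purely bookkeeping; the only point requiring care is matching the summation limits to the branch endpoints, namely that the kernel sum runs to $n-1$ rather than $n$ because $\int_0^n$ on $\Z$ ranges over $j=0,\dots,n-1$. This is exactly why the second branch is recorded on $[i,n-1]$ and why the admissible indices reduce to $1\leq i\leq n$ and $1\leq j\leq n-1$, as noted at the close of the statement.
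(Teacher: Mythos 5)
Your proposal is correct and follows exactly the route the paper intends: the paper states Corollary \ref{co2} without a written proof, treating it as an immediate specialization of Lemma \ref{le1} to $\T=\Z$, which is precisely what you carry out (delta integral collapsing to a sum over $j=0,\dots,n-1$, $f^\sigma(j)=x_{j+1}$, $f^\Delta(j)=\Delta x_j$, and evaluation of the kernel branches at $a=0$, $b=n$). Your bookkeeping of the boundary cases $K(i,0)$, $K(1,j)$, $K(n,j)$ and of why the index ranges reduce to $1\leq i\leq n$, $1\leq j\leq n-1$ is accurate.
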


\begin{corollary}[Quantum calculus case]\label{co3}
Let $\T= q^{\mathbb{N}_0}$, $q>1$, $a=q^m, b=q^n$ with $m<n$. Then
\begin{align*}
(1-\lambda)f(t)+&\lambda\frac{f(q^m)+f(q^n)}{2}=\frac{\sum\limits_{k=m}^{n-1}q^kf(q^{k+1})}{\sum\limits_{k=m}^{n-1}q^k}+\frac{1}{q^n-q^m}\sum\limits_{k=m}^{n-1}\left[f(q^{k+1})-f(q^{k})\right]K(t,q^k),
\end{align*}
where
\[
 K(t,q^k)=\left\{ \begin{array}{cl}
 \displaystyle q^k-\left(q^m+\lambda\frac{q^m-q^n}{2}\right),\ \ &q^k\in [q^m, t), \hfill \medskip\\
  \displaystyle q^k-\left(q^n-\lambda\frac{q^m-q^n}{2}\right),\ \ &q^k\in [t, q^n]. \hfill
\end{array} \right.
\]
\end{corollary}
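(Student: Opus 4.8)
The plan is to derive Corollary \ref{co3} as a direct specialization of the Generalized Montgomery Identity of Lemma \ref{le1} to the quantum time scale $\T = q^{\mathbb{N}_0}$ with $q>1$, $a = q^m$, $b = q^n$. First I would record the relevant quantum-calculus data. On $q^{\mathbb{N}_0}$ the forward jump operator is $\sigma(t) = qt$, so the graininess is $\mu(t) = \sigma(t) - t = (q-1)t$; hence the delta derivative at a grid point reads $f^\Delta(q^k) = \frac{f(q^{k+1}) - f(q^k)}{(q-1)q^k}$, and for any rd-continuous $g$ the delta integral collapses to the finite sum $\int_{q^m}^{q^n} g(s)\,\Delta s = \sum_{k=m}^{n-1}\mu(q^k)\,g(q^k) = (q-1)\sum_{k=m}^{n-1} q^k g(q^k)$. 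I would also note the geometric identity $b - a = q^n - q^m = (q-1)\sum_{k=m}^{n-1} q^k$, which is used to simplify the normalizing factor $\frac{1}{b-a}$.

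With these in hand, I would evaluate the two integral terms of Lemma \ref{le1} separately. Taking $g = f^\sigma$ gives $\int_a^b f^\sigma(s)\,\Delta s = (q-1)\sum_{k=m}^{n-1} q^k f(q^{k+1})$, and dividing by $b-a = (q-1)\sum_{k=m}^{n-1} q^k$ cancels the common factor $q-1$ to leave the weighted mean $\frac{\sum_{k=m}^{n-1} q^k f(q^{k+1})}{\sum_{k=m}^{n-1} q^k}$, which is the first term of the claimed identity. For the kernel term I would take $g(s) = K(t,s) f^\Delta(s)$, so that each summand is $\mu(q^k) K(t,q^k) f^\Delta(q^k) = (q-1)q^k \cdot K(t,q^k) \cdot \frac{f(q^{k+1}) - f(q^k)}{(q-1)q^k}$; the factor $(q-1)q^k$ cancels, leaving $\sum_{k=m}^{n-1}\big[f(q^{k+1}) - f(q^k)\big] K(t,q^k)$, which is then divided by $q^n - q^m$ exactly as in the identity.

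Finally I would identify the kernel by substituting $a = q^m$ and $b = q^n$ into the piecewise definition \eqref{eq6} and restricting the argument to the grid points $s = q^k$; the case split $s\in[q^m,t)$ versus $s\in[t,q^n]$ transfers verbatim and yields the stated $K(t,q^k)$. I do not anticipate any genuine obstacle, since the whole argument is a bookkeeping specialization; the only steps needing care are the two cancellations of the graininess factor $(q-1)q^k$ and the correct application of the geometric-sum identity. The single point I would verify before finalizing is the sign inside the kernel, since the quantity $\lambda\frac{b-a}{2} = \lambda\frac{q^n-q^m}{2}$ produced by \eqref{eq6} differs in sign from the $\lambda\frac{q^m-q^n}{2}$ displayed in the corollary.
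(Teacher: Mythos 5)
Your proof is correct and is exactly the specialization the paper intends: Corollary \ref{co3} is stated without proof as an immediate consequence of Lemma \ref{le1}, using $\sigma(t)=qt$, $\mu(t)=(q-1)t$, the reduction of the delta integral to the sum $\sum_{k=m}^{n-1}\mu(q^k)g(q^k)$, and the geometric identity $q^n-q^m=(q-1)\sum_{k=m}^{n-1}q^k$, all of which you carry out accurately. You are also right to flag the sign: substituting $a=q^m$, $b=q^n$ into \eqref{eq6} produces $\lambda\frac{q^n-q^m}{2}$, so the $\lambda\frac{q^m-q^n}{2}$ appearing in the printed kernel is a sign typo in the paper's statement rather than a defect in your derivation.
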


\begin{proof}[Proof of Theorem~\ref{th4}]
By applying Lemma \ref{le1}, we get
\begin{align*}
 &\Bigg|(1-\lambda)f(t)+\lambda\frac{f(a)+f(b)}{2}- \frac{1}{b-a} \int\limits_a^b f^{\sigma}(s)\Delta
s\Bigg|\\
\leq &\frac{1}{b-a}\left|\int\limits_a^b K(t,s)f^{\Delta}(s)\Delta s\right|\\
  \leq &\frac{M}{b-a} \left(\int\limits_a^t {\left| {K\left( {t,s} \right)} \right|\Delta s}  + \int\limits_t^b {\left| {K\left( {t,s} \right)} \right|\Delta
  s}\right)
    \hfill \\
   = &\frac{M}{b-a} \left(\int\limits_a^t {\left| {s - \left( {a + \lambda \frac{{b - a}}
{2}} \right)} \right|\Delta s}  + \int\limits_t^b {\left| {s - \left( {b -
\lambda \frac{{b - a}}
{2}} \right)} \right|\Delta s} \right) \hfill \\
   = &\frac{M}{b-a} \left(\int\limits_a^{a + \lambda \frac{{b - a}}
{2}} {\left| {s - \left( {a + \lambda \frac{{b - a}} {2}} \right)}
\right|\Delta s}  + \int\limits_{a + \lambda \frac{{b - a}} {2}}^t {\left|
{s - \left( {a + \lambda \frac{{b - a}}
{2}} \right)} \right|\Delta s}\right.  \hfill \\
   &\left.+ \int\limits_t^{b - \lambda \frac{{b - a}}
{2}} {\left| {s - \left( {b - \lambda \frac{{b - a}} {2}} \right)}
\right|\Delta s}  + \int\limits_{b - \lambda \frac{{b - a}} {2}}^b {\left|
{s - \left( {b - \lambda \frac{{b - a}}
{2}} \right)} \right|\Delta s}\right)  \hfill \\
   = &\frac{M}{b-a} \left(\int\limits_{a + \lambda \frac{{b - a}}
{2}}^a {\left( {s - \left( {a + \lambda \frac{{b - a}} {2}} \right)}
\right)\Delta s}  + \int\limits_{a + \lambda \frac{{b - a}} {2}}^t {\left(
{s - \left( {a + \lambda \frac{{b - a}}
{2}} \right)} \right)\Delta s}\right.  \hfill \\
  &\left. + \int\limits_{b - \lambda \frac{{b - a}}
{2}}^t {\left( {s - \left( {b - \lambda \frac{{b - a}} {2}} \right)}
\right)\Delta s}  + \int\limits_{b - \lambda \frac{{b - a}} {2}}^b {\left(
{s - \left( {b - \lambda \frac{{b - a}}
{2}} \right)} \right)\Delta s} \right) \hfill \\
   = &\frac{M}{b-a} \left(h_2 \left( {a,a + \lambda \frac{{b - a}}
{2}} \right) + h_2 \left( {t,a + \lambda \frac{{b - a}}
{2}} \right)\right. \hfill \\
   &\left.+ h_2 \left( {t,b - \lambda \frac{{b - a}}
{2}} \right) + h_2 \left( {b,b - \lambda \frac{{b - a}} {2}}
\right)\right),
\end{align*}
which completes the first part of our proof. To prove the sharpness of this inequality, let $f(t)=t$,
$t=b-\lambda \frac{b-a}{2}$. It follows that $M=1$. Starting with
the righ-hand side of \eqref{eq4}, we have
\begin{align*}
\frac{M}{b-a} \Bigg(&h_2 \left( {a,a + \lambda \frac{{b - a}}
{2}} \right) + h_2 \left( {t,a + \lambda \frac{{b - a}}
{2}} \right)\\
   & +h_2 \left( {t,b - \lambda \frac{{b - a}}
{2}} \right) + h_2 \left( {b,b - \lambda \frac{{b - a}} {2}}
\right)\Bigg)\\
=  \frac{1}{b-a} \Bigg(&h_2 \left( {a,a + \lambda \frac{{b - a}}
{2}} \right) + h_2 \left( {b - \lambda \frac{{b - a}} {2},a + \lambda \frac{{b - a}}
{2}} \right) + h_2 \left( {b,b - \lambda \frac{{b - a}} {2}}\right)\Bigg) .\\
\end{align*}
Moreover,
\begin{align*}
  h_2 \left( {a,a + \lambda \frac{{b - a}}
{2}} \right) &= \int\limits_{a + \lambda \frac{{b - a}}
{2}}^a {\left( {s - \left( {a + \lambda \frac{{b - a}}
{2}} \right)} \right)\Delta s}  \hfill \\
   &= \int\limits_{a +  \lambda \frac{{b - a}}
{2}}^a {s\Delta s}  - \left( {a + \lambda \frac{{b - a}}
{2}} \right)\left( {a - \left( {a + \lambda \frac{{b - a}}
{2}} \right)} \right) \hfill \\
   &= \int\limits_{a + \lambda \frac{{b - a}}
{2}}^a {s\Delta s}  + \left( {a + \lambda \frac{{b - a}}
{2}} \right)\lambda \frac{{b - a}}
{2}.
\end{align*}
\begin{align*}
  h_2 \left( {b - \lambda \frac{{b - a}}
{2},a + \lambda \frac{{b - a}}
{2}} \right) &= \int\limits_{a + \lambda \frac{{b - a}}
{2}}^{b - \lambda \frac{{b - a}}
{2}} {\left( {s - \left( {a + \lambda \frac{{b - a}}
{2}} \right)} \right)\Delta s}  \hfill \\
   &= \int\limits_{a + \lambda \frac{{b - a}}
{2}}^{b - \lambda \frac{{b - a}}
{2}} {s\Delta s}  - \left( {a + \lambda \frac{{b - a}}
{2}} \right)\left( {b - \lambda \frac{{b - a}}
{2} - \left( {a + \lambda \frac{{b - a}}
{2}} \right)} \right) \hfill \\
   &= \int\limits_{a +  \lambda \frac{{b - a}}
{2}}^{b - \lambda \frac{{b - a}}
{2}} {s\Delta s}  - \left( {a + \lambda \frac{{b - a}}
{2}} \right)\left( {b - a} \right)\left( 1- \lambda \right).
\end{align*}
\begin{align*}
  h_2 \left( {b,b - \lambda \frac{{b - a}}
{2}} \right) &= \int\limits_{b - \lambda \frac{{b - a}}
{2}}^b {\left( {s - \left( {b - \lambda \frac{{b - a}}
{2}} \right)} \right)\Delta s}  \hfill \\
   &= \int\limits_{b - \lambda \frac{{b - a}}
{2}}^b {s\Delta s}  - \left( {b - \lambda \frac{{b - a}}
{2}} \right)\left( {b - \left( {b - \lambda \frac{{b - a}}
{2}} \right)} \right) \hfill \\
   &= \int\limits_{b - \lambda \frac{{b - a}}
{2}}^b {s\Delta s} - \left( {b - \lambda \frac{{b - a}}
{2}} \right)\lambda \frac{{b - a}}
{2}.
\end{align*}
Thus, in this situation, the right-hand side of \eqref{eq4} equals to
\begin{align*}
 &  \frac{1}{b-a}\left(  - \int\limits_a^{a + \lambda \frac{{b - a}}
{2}} {s\Delta s}  + \int\limits_{a + \lambda \frac{{b - a}} {2}}^{b
- \lambda \frac{{b - a}} {2}} {s\Delta s}  + \int\limits_{b -
\lambda \frac{{b - a}}
{2}}^b {s\Delta s} \right)  \hfill \\
 &  + \left( {a + \lambda \frac{{b - a}}
{2}} \right) \frac{\lambda} {2} - \left( {a + \lambda \frac{{b - a}}
{2}} \right) \left( {1 - \lambda } \right) - \left( {b - \lambda
\frac{{b - a}} {2}} \right) \frac{\lambda}
{2} \hfill \\
     = & \frac{1}{b-a}\left( - 2\int\limits_a^{a + \lambda \frac{{b - a}}
{2}} {s\Delta s}  + \int\limits_{a }^{b}
{s\Delta s} \right)  - \left( {a + \lambda (b - a)} \right)\left( {1 - \lambda } \right).\hfill \\
\end{align*}
Starting with the left-hand side of \eqref{eq4}, we have
\begin{align*}
  \Bigg| \left( {1 - \lambda } \right)f\left( t \right) &+ \lambda \frac{{f\left( a \right) + f\left( b \right)}}
{2} - \frac{1}{b-a} \int\limits_a^b {f^\sigma  \left( s \right)\Delta s}  \Bigg| \\
   &= \left| {\left( {1 - \lambda } \right)\left( {b - \lambda \frac{{b - a}}
{2}} \right) + \lambda \frac{{a + b}}
{2} -  \frac{1}{b-a}  \int\limits_a^b {\sigma \left( s \right)\Delta s} } \right| \hfill \\
   &= \left| {\left( {1 - \lambda } \right)\left( {b - \lambda \frac{{b - a}}
{2}} \right) + \lambda \frac{{a + b}}
{2} +  \frac{1}{b-a}  \int\limits_a^b {s\Delta s}  - b - a} \right| \hfill \\
   &= \left| { - \lambda \left( {1 - \frac{\lambda }{2}} \right)\left( {b - a} \right) - a
   +  \frac{1}{b-a}  \int\limits_a^b {s\Delta s} } \right|,
\end{align*}
where  we have used
$$\int\limits_a^b {\sigma \left( s \right)\Delta
s}=\int\limits_a^b \big(\sigma ( s)+s\big)\Delta s-\int\limits_a^b s\Delta
s=\int\limits_a^b (s^2)^\Delta\Delta s-\int\limits_a^b s\Delta
s=b^2-a^2-\int\limits_a^b s\Delta s.$$
So, if
\[
\frac{\lambda }{2}a(b-a) + \frac{{\lambda ^2 }}{4}\left( {b - a}
\right)^2 \leqslant \int\limits_a^{a + \lambda \frac{{b - a}} {2}}
{s\Delta s}
\]
holds true, then
\begin{align*}
  \Bigg|  - \lambda \left( {1 - \frac{\lambda }
{2}} \right)&\left( {b - a} \right) - a + \frac{1}
{{b - a}}\int\limits_a^b {s\Delta s}  \Bigg|  \\
  & \geqslant  - \lambda \left( {1 - \frac{\lambda }
{2}} \right)\left( {b - a} \right) - a + \frac{1}
{{b - a}}\int\limits_a^b {s\Delta s} \\
 &  \geqslant \frac{1}{b-a}\left( - 2\int\limits_a^{a + \lambda \frac{{b - a}}
{2}} {s\Delta s}  + \int\limits_{a }^{b} {s\Delta s} \right)  -
\big( {a + \lambda (b - a)} \big)\left( {1 - \lambda } \right),
\end{align*}
which helps us to complete our proof.
 \end{proof}

If we apply the the inequality \eqref{eq4} to different time scales, we will get some well-known and some new results.

\begin{corollary}[Continuous case]\label{co4}
Let $\T= \mathbb{R}$. Then our delta integral is the usual
Riemann integral from calculus. Hence,
\[
h_2 \left( {t,s} \right) = \frac{{\left( {t - s} \right)^2 }}{2},
\quad {\text{ for all }} \quad t, s \in \mathbb R.
\]
This leads us to state the following inequality
\[
\begin{split}
\Bigg|(1-\lambda)f(t)+\lambda\frac{f(a)+f(b)}{2}&-\frac{1}{b-a}\int\limits_a^b
f(s)\,{\rm d} s\Bigg| \\
\leq &
M\left(\frac{1}{4}(b-a)\big( (1-\lambda)^2+\lambda^2 \big)+\frac{1}{b-a}\left(x-\frac{a+b}{2}\right)^2\right)
\end{split}
\]
for all $\lambda\in [0, 1]$ and $a+\lambda \frac{b-a}{2} \leq t\leq b-\lambda \frac{b-a}{2}$, where $M=\sup\limits_{x\in (a,b)} |f'(x)|<\infty$, which is exactly the generalized Ostrowski type inequality shown in Theorem 2  of \cite{dcr}.
\end{corollary}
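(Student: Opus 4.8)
The plan is to specialize the general bound \eqref{eq4} of Theorem \ref{th4} to the time scale $\T = \mathbb{R}$ and then simplify the four $h_2$-terms on its right-hand side by elementary algebra. First I would record the two structural facts that make $\mathbb{R}$ special and that are already asserted in the statement: since every point is dense we have $\sigma(s) = s$, so that $f^\sigma = f$ and the $\Delta$-integral reduces to the ordinary Riemann integral; and the recursion of Definition \ref{de7} integrates to the closed form $h_2(t,s) = (t-s)^2/2$. With these substitutions the left-hand side of \eqref{eq4} already coincides with the left-hand side claimed in the corollary, so the entire task is to evaluate the right-hand side explicitly.

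Next I would substitute $h_2(t,s) = (t-s)^2/2$ into each of the four terms. The two endpoint terms are immediate, since the relevant differences equal $\lambda\frac{b-a}{2}$ in absolute value:
\[
h_2\!\left(a, a+\lambda\tfrac{b-a}{2}\right) = h_2\!\left(b, b-\lambda\tfrac{b-a}{2}\right) = \frac{1}{2}\left(\lambda\frac{b-a}{2}\right)^2 = \frac{\lambda^2(b-a)^2}{8},
\]
so together these contribute $\lambda^2(b-a)^2/4$.

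The only step that requires genuine care is combining the two middle terms, both of which involve $t$. Here I would introduce the midpoint deviation $u := t - \frac{a+b}{2}$ and rewrite the two arguments as
\[
t - a - \lambda\tfrac{b-a}{2} = u + (1-\lambda)\tfrac{b-a}{2}, \qquad t - b + \lambda\tfrac{b-a}{2} = u - (1-\lambda)\tfrac{b-a}{2}.
\]
The parallelogram identity $(p+q)^2 + (p-q)^2 = 2p^2 + 2q^2$, applied with $p = u$ and $q = (1-\lambda)\frac{b-a}{2}$, then collapses the sum of the two middle terms to $u^2 + (1-\lambda)^2(b-a)^2/4$. Adding the endpoint contribution gives the total $u^2 + \frac{(b-a)^2}{4}\big((1-\lambda)^2 + \lambda^2\big)$ inside the parentheses, and multiplying through by $\frac{M}{b-a}$ yields precisely the claimed right-hand side (with $x$ written in place of $t$).

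The main obstacle is purely bookkeeping: one must choose the sign convention in the substitution for $u$ so that the linear cross terms in the two expanded squares cancel, which is exactly what the parallelogram identity encodes. No analytic difficulty arises, because the inequality itself is inherited verbatim from Theorem \ref{th4} and the explicit constant is a transparent consequence of the formula $h_2(t,s) = (t-s)^2/2$ valid on $\mathbb{R}$.
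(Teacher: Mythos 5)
Your proposal is correct and follows the same route the paper intends: the paper states Corollary \ref{co4} as a direct specialization of Theorem \ref{th4} with $h_2(t,s)=(t-s)^2/2$, and your computation of the four $h_2$-terms (with the parallelogram identity tidying the two $t$-dependent ones) is exactly the algebra the paper leaves implicit. The only discrepancy is notational: the paper's right-hand side writes $x$ where it should write $t$, which you correctly identify as a transcription issue rather than a mathematical one.
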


\begin{corollary}[Discrete case]\label{co5}
Let $\T= \Z$, $a=0$, $b=n$, $s=j$, $t=i$ and $f(k)=x_k$. Thus, we have
\[
\left|(1-\lambda)x_{i} +\lambda\frac{x_0+x_n}{2}-
\frac{1}{n}\sum\limits_{j = 1}^n {x_j } \right| \leq
\frac{M}{n}\left(\left|i-\frac{n+1}{2}\right|^2+\frac{(2\lambda^2-2\lambda+1)n^2-1}{4}\right)
\]
for all $i \in \left[\frac{\lambda n}{2}, n- \frac{\lambda n}{2} \right] \cap \T$, where $M=\max\limits_{1 \leq i \leq n-1}
|\Delta x_i|<\infty$.
\end{corollary}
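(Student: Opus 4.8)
The plan is to specialize Theorem \ref{th4} to the time scale $\T=\Z$ and then simplify the four kernel terms on the right-hand side of \eqref{eq4}. First I would record the structural facts about $\Z$ that drive everything. The forward jump operator is $\sigma(s)=s+1$, so the $\Delta$-integral of $f^\sigma$ collapses to a sum, $\frac{1}{b-a}\int_a^b f^\sigma(s)\Delta s=\frac{1}{n}\sum_{j=0}^{n-1}f(j+1)=\frac{1}{n}\sum_{j=1}^n x_j$; the quantity $M=\sup_{a<t<b}|f^\Delta(t)|$ becomes $\max_{1\le i\le n-1}|\Delta x_i|$; and the admissibility condition $t\in[a+\lambda\frac{b-a}{2},\,b-\lambda\frac{b-a}{2}]\cap\T$ turns into $i\in[\frac{\lambda n}{2},\,n-\frac{\lambda n}{2}]\cap\T$ upon setting $a=0$, $b=n$. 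With these substitutions the left-hand side of \eqref{eq4} is already the left-hand side of the asserted inequality, so the whole content of the corollary is the evaluation of the right-hand side.

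Next I would compute the kernel $h_2$ on $\Z$ from Definition \ref{de7}: since $h_1(t,s)=\int_s^t\Delta\tau=t-s$, one gets $h_2(t,s)=\int_s^t(\tau-s)\Delta\tau=\frac{(t-s)(t-s-1)}{2}$. Writing $p:=\frac{\lambda n}{2}$, the four arguments occurring in \eqref{eq4} are $h_2(0,p)$, $h_2(i,p)$, $h_2(i,n-p)$ and $h_2(n,n-p)$. Substituting the closed form, the two endpoint terms add up cleanly, $\frac{p^2+p}{2}+\frac{p^2-p}{2}=p^2$, while the two middle terms supply the quadratic dependence on $i$.

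The remaining work is the algebra: expand $\frac{(i-p)(i-p-1)}{2}+\frac{(i-n+p)(i-n+p-1)}{2}$, observe that the mixed $ip$ terms cancel, add the $p^2$ coming from the endpoints, and complete the square in $i$. This produces the term $(i-\frac{n+1}{2})^2$ together with a constant; collecting that constant and re-substituting $p=\frac{\lambda n}{2}$ converts it into $\frac{(2\lambda^2-2\lambda+1)n^2-1}{4}$, and multiplying the total by $\frac{M}{n}$ yields exactly the stated bound. I expect the only genuine obstacle to be bookkeeping, namely getting every contribution to the completed square and to the additive constant to land with the correct sign. I would also flag the minor point that, although $p=\frac{\lambda n}{2}$ need not be an integer, the polynomial expression for $h_2$ is applied formally to these arguments, exactly as is done in the proof of Theorem \ref{th4}.
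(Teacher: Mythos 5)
Your proposal is correct and follows essentially the same route as the paper: specialize Theorem \ref{th4} to $\T=\Z$, use $h_2(t,s)=\binom{t-s}{2}=\frac{(t-s)(t-s-1)}{2}$, evaluate the four kernel terms, and complete the square in $i$ (the paper merely lists the four $h_2$ values and asserts the result, whereas you spell out the cancellation and the constant $\frac{(2\lambda^2-2\lambda+1)n^2-1}{4}$, which does check out). Your closing remark about applying the polynomial formula for $h_2$ at the possibly non-integer points $\frac{\lambda n}{2}$ and $n-\frac{\lambda n}{2}$ is a fair caveat that the paper glosses over as well.
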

\begin{proof}
In this situation,  it is known that
\[
h_k \left( {t,s} \right) = \left( {\begin{array}{*{20}c}
  {t - s} \medskip\\
  k \\
 \end{array} } \right) , \quad {\text{ for all }} \quad t,s \in \mathbb Z.
\]
Therefore,
\begin{align*}
h_2 \left( {a,a + \lambda \frac{{b - a}} {2}} \right)&= \left(
{\begin{array}{*{20}c}
   -\frac{n\lambda}{2}\medskip \\
  2 \\
 \end{array} } \right) = \frac{\frac{n\lambda}{2} \left(\frac{n\lambda}{2}+1 \right)}
{2},\\
h_2 \left( {t,a + \lambda \frac{{b - a}} {2}} \right)&= \left(
{\begin{array}{*{20}c}
  i-\frac{n\lambda}{2} \medskip\\
  2 \\
 \end{array} } \right) = \frac{\left(i-\frac{n\lambda}{2} \right)\left(i-\frac{n\lambda}{2}-1 \right)}
{2},\\
h_2 \left( {t,b- \lambda \frac{{b - a}} {2}} \right)&= \left(
{\begin{array}{*{20}c}
  i-n+\frac{n\lambda}{2} \medskip\\
  2 \\
 \end{array} } \right) = \frac{\left(i-n+\frac{n\lambda}{2} \right)\left(i-n+\frac{n\lambda}{2}-1 \right)}
{2},
\end{align*}
and
\[
h_2 \left( {t,b- \lambda \frac{{b - a}} {2}} \right)= \left(
{\begin{array}{*{20}c}
  \frac{n\lambda}{2} \medskip\\
  2 \\
 \end{array} } \right) = \frac{\frac{n\lambda}{2}\left(\frac{n\lambda}{2}-1 \right)}
{2},
\]
Thus, we get the desired result. \end{proof}

\begin{corollary}\label{co6} {\rm (Quantum calculus case)}. Let
$\T= q^{\mathbb{N}_0}$, $q>1$, $a=q^m, b=q^n$ with $m<n$.
Then
\[
\begin{split}
 & \Bigg|(1-\lambda)f(t)+\lambda\frac{f(q^m)+f(q^n)}{2}-\frac{1}{q^n-q^m}\int\limits_{q^m}^{q^n}
f^\sigma(s)\Delta
 s\Bigg| \\
\leq
&\frac{M}{(1+q)(q^n-q^m)}\Bigg(2t^2-(1+q)(q^m+q^n)t \\
&+ \left(\left(2\lambda^2-\frac{3}{2}\lambda+1\right)(q^{2m+1}+q^{2n+1})
-\lambda(3-2\lambda)q^{m+n+1}+\frac{\lambda}{2}(q^m-q^n)^2\right) \Bigg)
\end{split}
\]
for all
\[
t \in \left[ {q^m  + \lambda \frac{{q^n  - q^m }}{2},q^n  - \lambda \frac{{q^n  - q^m }}{2}} \right] \cap \T
\]
where
$$M=\sup\limits_{t\in (q^m, q^n)}\left|\frac{f(qt)-f(t)}{(q-1)t}\right|.$$
\end{corollary}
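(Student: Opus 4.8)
The plan is to specialize Theorem~\ref{th4} to the $q$-time scale $\T = q^{\mathbb{N}_0}$ and then evaluate in closed form the four $h_2$-terms on the right-hand side of \eqref{eq4}. First I would record the structure of this time scale: here $\sigma(t) = qt$ and the graininess is $\mu(t) = (q-1)t$, so that $f^\Delta(t) = \dfrac{f^\sigma(t) - f(t)}{\mu(t)} = \dfrac{f(qt) - f(t)}{(q-1)t}$, which is precisely the expression in the stated formula for $M$. The key computational ingredient is the closed form of the second generalized polynomial of Definition~\ref{de7}. Since $(\tau^2)^\Delta = \tau + \sigma(\tau) = (1+q)\tau$, an antiderivative of $\tau$ is $\tau^2/(1+q)$, and hence
\[
h_2(t,s) = \int\limits_s^t (\tau - s)\Delta\tau = \frac{t^2 - s^2}{1+q} - s(t-s) = \frac{(t-s)(t-qs)}{1+q}.
\]

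Next I would apply Theorem~\ref{th4} with $a = q^m$ and $b = q^n$; note that the admissible range for $t$ is then exactly the interval displayed in the corollary. To keep the algebra manageable I would abbreviate $c := q^m + \lambda\frac{q^n - q^m}{2}$ and $d := q^n - \lambda\frac{q^n - q^m}{2}$, so that the four terms in \eqref{eq4} read $h_2(q^m, c)$, $h_2(t, c)$, $h_2(t, d)$ and $h_2(q^n, d)$. Substituting the closed form above and extracting the common factor $\frac{1}{1+q}$ from every term produces the prefactor $\frac{M}{(1+q)(q^n - q^m)}$ shown in the corollary, leaving me to simplify
\[
(q^m - c)(q^m - qc) + (t-c)(t-qc) + (t-d)(t-qd) + (q^n - d)(q^n - qd).
\]

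It then remains to expand this sum and regroup it as a polynomial in $t$. I would handle the two $t$-dependent factors together, using $(t-c)(t-qc) + (t-d)(t-qd) = 2t^2 - (1+q)(c+d)t + q(c^2 + d^2)$ and observing that $c + d = q^m + q^n$, so that the quadratic and linear parts immediately give the contributions $2t^2$ and $-(1+q)(q^m+q^n)t$. The remaining two factors $(q^m-c)(q^m-qc)$ and $(q^n-d)(q^n-qd)$ are constant in $t$; computing $q^m - c = -\lambda\frac{q^n-q^m}{2}$, $q^n - d = \lambda\frac{q^n-q^m}{2}$ together with the companion factors $q^m-qc$ and $q^n-qd$ reduces these, and the term $q(c^2+d^2)$, to expressions in $q^m$, $q^n$ and $\lambda$ alone.

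The one genuinely laborious step is assembling this $t$-free remainder: after substituting $c$ and $d$ back in terms of $q^m$, $q^n$ and $\lambda$ and collecting like powers of $q$, one sorts the result into the coefficients of $q^{2m+1}+q^{2n+1}$, of $q^{m+n+1}$, and of $(q^m - q^n)^2$, which yields the bracketed constant of the corollary. No conceptual difficulty arises beyond Theorem~\ref{th4} and the closed form for $h_2$; the main obstacle is purely the bookkeeping of the $\lambda$- and $q$-dependent constant term, and that is the step where an arithmetic slip is easiest to make.
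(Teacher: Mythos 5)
Your plan follows essentially the same route as the paper's own proof: specialize Theorem~\ref{th4} to $a=q^m$, $b=q^n$, and evaluate the four $h_2$-terms using the closed form $h_2(t,s)=\frac{(t-s)(t-qs)}{1+q}$. (The paper quotes the general product formula for $h_k$ on $q^{\mathbb{N}_0}$, whereas you derive the $k=2$ case directly from Definition~\ref{de7}; these are equivalent.) Your intermediate steps --- the identification of $f^\Delta$, the identity $c+d=q^m+q^n$, and the resulting $2t^2-(1+q)(q^m+q^n)t$ contribution --- are all correct.

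The one reservation concerns precisely the step you postpone as pure bookkeeping. If you actually assemble the $t$-free remainder
\[
q(c^2+d^2)+(q^m-c)(q^m-qc)+(q^n-d)(q^n-qd),
\]
you obtain
\[
\Big(\lambda^2-\tfrac{3}{2}\lambda+1\Big)\big(q^{2m+1}+q^{2n+1}\big)+\lambda(3-2\lambda)\,q^{m+n+1}+\frac{\lambda}{2}\big(q^m-q^n\big)^2,
\]
which differs from the constant displayed in the corollary: the leading coefficient is $\lambda^2$ rather than $2\lambda^2$, and the $q^{m+n+1}$ term enters with the opposite sign. A numerical check with $q=2$, $m=0$, $n=1$, $\lambda=1$ gives the true constant $9.5$ versus the printed $11.5$; at $\lambda=0$ the two expressions agree, which is why the discrepancy is easy to miss. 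The paper's own proof likewise stops after listing the four $h_2$-values and simply asserts the result, so the error lies in the printed statement rather than in your method; but since your proposal leaves exactly this computation unexecuted, you must carry it out to completion, and when you do you should correct the stated constant accordingly.
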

\begin{proof}
In this situation, one has
\[
h_k \left( {t,s} \right) = \prod\limits_{\nu = 0}^{k - 1} {\frac{{t
- q^\nu s}} {{\sum\limits_{\mu = 0}^\nu {q^\mu } }}}, \quad {\text{
for all }} \quad t,s \in \T
\]
and
$$f^\Delta(t)=\frac{f(qt)-f(t)}{(q-1)t}.$$
Therefore,
\begin{align*}
h_2 \left( {q^m, q^m+\lambda\frac{q^n-q^m}{2} } \right) &=
\frac{\frac{\lambda}{2}{\left( {q^m - q^n } \right)\left[ q^m -
\left(1-\frac{\lambda}{2}\right)q^{m + 1} -\frac{\lambda}{2}q^{n +
1} \right]}} {{1 + q}},\\
h_2 \left( {t, q^m+\lambda\frac{q^n-q^m}{2} } \right) &=
\frac{\left[ t - \left(1-\frac{\lambda}{2}\right)q^{m}
-\frac{\lambda}{2}q^{n} \right]\left[ t -
\left(1-\frac{\lambda}{2}\right)q^{m + 1} -\frac{\lambda}{2}q^{n +
1} \right]} {{1 + q}},\\
h_2 \left( {t, q^n-\lambda\frac{q^n-q^m}{2} } \right) &=
\frac{\left[ t - \left(1-\frac{\lambda}{2}\right)q^{n}
-\frac{\lambda}{2}q^{m} \right]\left[ t -
\left(1-\frac{\lambda}{2}\right)q^{n + 1} -\frac{\lambda}{2}q^{m +
1} \right]} {{1 + q}},
\end{align*}
and
\[
h_2 \left( {q^m, q^n-\lambda\frac{q^n-q^m}{2} } \right) =
\frac{\frac{\lambda}{2}{\left( {q^n - q^m } \right)\left[ q^n -
\left(1-\frac{\lambda}{2}\right)q^{n + 1} -\frac{\lambda}{2}q^{m +
1} \right]}} {{1 + q}}.
\] Thus, we get the result. \end{proof}

\section{Some particular Ostrowski type inequalities on time scales}

In this section we point out some particular Ostrowski type
inequalities on time scales as special cases, such as: {\it
rectangle inequality} on time scales, {\it trapezoid inequality}
on time scales, {\it mid-point inequality} on time scales, {\it
Simpson inequality} on time scales,
 {\it averaged mid-point-trapezoid inequality} on time scales and
others.

Throughout this section, we always assume $\T$ is a time scale;
$a, b \in \T$ with $a < b$; $f: [a, b]\rightarrow \mathbb{R}$ is
differentiable.
 We denote $$M=\sup_{a<x<b}|f^\Delta(x)|.$$

\begin{corollary}\label{co7}
Under the assumptions of Theorem \ref{th4} with $\lambda=1$ and
$t=\frac{a+b}{2}\in \mathbb{T}$. Then we have the trapezoid
inequality on time scales
\begin{equation}\label{eq8}
 \left|\frac{f(a)+f(b)}{2}-\frac{1}{b-a}\int\limits_a^b f^\sigma(s)\Delta
 s\right|
  \leq \frac{M}{b-a}\left(h_2\left(a, \frac{a+b}{2}\right)+h_2\left(b, \frac{a+b}{2}\right)\right).
\end{equation}
\end{corollary}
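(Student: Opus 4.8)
The plan is to obtain this corollary as a direct specialization of Theorem~\ref{th4}, so essentially no new work is required beyond substituting $\lambda = 1$ and simplifying. First I would set $\lambda = 1$ throughout inequality~\eqref{eq4}. On the left-hand side the term $(1-\lambda)f(t)$ drops out and $\lambda\frac{f(a)+f(b)}{2}$ becomes $\frac{f(a)+f(b)}{2}$, which is exactly the left-hand side of~\eqref{eq8}.

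Next I would simplify the nodes appearing on the right-hand side. With $\lambda = 1$ one has $a + \lambda\frac{b-a}{2} = b - \lambda\frac{b-a}{2} = \frac{a+b}{2}$, so all four second-order functions $h_2$ in~\eqref{eq4} are evaluated at the common second argument $\frac{a+b}{2}$. I would also observe that the admissible interval for $t$ in Theorem~\ref{th4}, namely $[a+\lambda\frac{b-a}{2},\, b-\lambda\frac{b-a}{2}]\cap\T$, collapses to the single point $\{\frac{a+b}{2}\}$ when $\lambda=1$; this forces the evaluation point to be $t = \frac{a+b}{2}$, in agreement with the stated hypothesis $\frac{a+b}{2}\in\T$.

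The key step is then to note that the two middle terms vanish. Since the chosen $t = \frac{a+b}{2}$ coincides with the second argument, Definition~\ref{de7} together with part~(5) of Proposition~\ref{pro1} gives $h_2\left(\frac{a+b}{2}, \frac{a+b}{2}\right) = \int_{(a+b)/2}^{(a+b)/2} h_1\!\left(\tau, \frac{a+b}{2}\right)\Delta\tau = 0$. Hence only $h_2\left(a, \frac{a+b}{2}\right)$ and $h_2\left(b, \frac{a+b}{2}\right)$ survive in~\eqref{eq4}, which reduces it exactly to~\eqref{eq8}.

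There is no genuine obstacle here: the entire argument is a substitution combined with the elementary identity $h_2(s,s)=0$. The only point requiring a moment's care is confirming that the constraint on the evaluation point $t$ is compatible with $\lambda=1$, that is, that the degenerate admissible set $\{\frac{a+b}{2}\}$ is nonempty precisely under the assumed condition $\frac{a+b}{2}\in\T$, which is why that membership is listed explicitly among the hypotheses.
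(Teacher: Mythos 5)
Your proposal is correct and is exactly the argument the paper intends: the corollary is a direct specialization of Theorem~\ref{th4} with $\lambda=1$, under which both nodes $a+\lambda\frac{b-a}{2}$ and $b-\lambda\frac{b-a}{2}$ coincide with $\frac{a+b}{2}$, the admissible set for $t$ collapses to $\{\frac{a+b}{2}\}$, and the two middle terms vanish because $h_2(s,s)=\int_s^s h_1(\tau,s)\Delta\tau=0$. The paper states the corollary without writing out this substitution, so there is nothing further to compare.
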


\begin{remark}
 If we take $\lambda=0$ in Theorem \ref{th4}, then Theorem
 \ref{th2} is recaptured. Therefore, Theorem \ref{th4} may be
 regarded as a generalization of Theorem \ref{th2}.
\end{remark}

\begin{corollary}\label{co8}
Under the assumptions of Theorem \ref{th4} with
$\lambda=\frac{1}{3}$. Then we have the following integral
inequality on time scales
\begin{equation}\label{eq9}
\begin{split}
  \Bigg|\frac{1}{6}\left(f(a)+f(b)+4f(t)\right)-\frac{1}{b-a}&\int\limits_a^b f^\sigma(s)\Delta
 s\Bigg| \\
  \leq \frac{M}{b-a}\Bigg(&h_2\left(a, \frac{5a+b}{6}\right)+h_2\left(t, \frac{5a+b}{6}\right)\\
  +&h_2\left(t, \frac{a+5b}{6}\right)+h_2\left(b, \frac{a+5b}{6}\right)\Bigg)
\end{split}
\end{equation}
for all $t\in \left[\frac{5a+b}{6}, \frac{a+5b}{6}\right]\cap
\mathbb{T}$.
\end{corollary}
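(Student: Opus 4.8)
The plan is to derive \eqref{eq9} directly from Theorem~\ref{th4} by specializing the parameter to $\lambda = \tfrac{1}{3}$; no fresh estimate is needed, and the whole argument reduces to substitution and elementary simplification.

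First I would rewrite the left-hand side of \eqref{eq4} at $\lambda = \tfrac{1}{3}$. Because $1-\lambda = \tfrac{2}{3}$ and $\lambda\frac{f(a)+f(b)}{2} = \frac{f(a)+f(b)}{6}$, the convex combination collapses to
\[
\frac{2}{3}f(t) + \frac{f(a)+f(b)}{6} = \frac{1}{6}\bigl(f(a)+f(b)+4f(t)\bigr),
\]
which is exactly the functional appearing on the left of \eqref{eq9}.

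Next I would evaluate the two shifted endpoints that define the kernel $K(t,s)$ of Lemma~\ref{le1}. At $\lambda = \tfrac{1}{3}$ one finds
\[
a + \lambda\frac{b-a}{2} = a + \frac{b-a}{6} = \frac{5a+b}{6}, \qquad b - \lambda\frac{b-a}{2} = b - \frac{b-a}{6} = \frac{a+5b}{6}.
\]
Inserting these values into the four $h_2$ terms on the right of \eqref{eq4} yields precisely $h_2\!\left(a,\frac{5a+b}{6}\right)$, $h_2\!\left(t,\frac{5a+b}{6}\right)$, $h_2\!\left(t,\frac{a+5b}{6}\right)$ and $h_2\!\left(b,\frac{a+5b}{6}\right)$, matching the right-hand side of \eqref{eq9}. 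The admissible set for $t$, namely $\left[a+\lambda\frac{b-a}{2},\, b-\lambda\frac{b-a}{2}\right]\cap\T$, correspondingly becomes $\left[\frac{5a+b}{6},\, \frac{a+5b}{6}\right]\cap\T$.

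Having matched every piece, I would conclude that \eqref{eq9} is an immediate consequence of \eqref{eq4}. There is no substantive obstacle here: the only care required is the arithmetic of the endpoint shifts $a\mapsto\frac{5a+b}{6}$ and $b\mapsto\frac{a+5b}{6}$. I would also note explicitly that this corollary uses only the inequality part of Theorem~\ref{th4}, so the sharpness side condition \eqref{eq5} is irrelevant here.
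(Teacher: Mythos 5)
Your proposal is correct and matches the paper's treatment: the corollary is stated there without proof precisely because it is the immediate specialization $\lambda=\tfrac{1}{3}$ of Theorem~\ref{th4}, and your arithmetic for the coefficient $\tfrac{1}{6}(f(a)+f(b)+4f(t))$ and the shifted endpoints $\tfrac{5a+b}{6}$, $\tfrac{a+5b}{6}$ is exactly right. Your remark that the sharpness condition \eqref{eq5} plays no role here is also correct.
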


\begin{remark}
If we choose $t=\frac{a+b}{2}$ in (\ref{eq9}), we get the Simpson
inequality on time scales
\begin{equation*}
\begin{split}
 \Bigg|\frac{1}{6}\left(f(a)+4f\left(\frac{a+b}{2}\right)+f(b)\right)-\frac{1}{b-a}&\int\limits_a^b f^\sigma(s)\Delta
 s\Bigg| \\
  \leq \frac{M}{b-a}\Bigg(&h_2\left(a, \frac{5a+b}{6}\right)+h_2\left(\frac{a+b}{2}, \frac{5a+b}{6}\right)\\
  +&h_2\left(\frac{a+b}{2}, \frac{a+5b}{6}\right)+h_2\left(b,   \frac{a+5b}{6}\right)\Bigg).
\end{split}
\end{equation*}
\end{remark}

\begin{corollary}\label{co9}
Under the assumptions of Theorem \ref{th4} with
$\lambda=\frac{1}{2}$. Then we have the following integral
inequality on time scales
\begin{equation}\label{eq10}
\begin{split}
 \Bigg|\frac{1}{2}\left(\frac{f(a)+f(b)}{2}+f(t)\right)-\frac{1}{b-a}&\int\limits_a^b f^\sigma(s)\Delta
 s\Bigg| \\
  \leq \frac{M}{b-a}\Bigg(&h_2\left(a, \frac{3a+b}{4}\right)+h_2\left(t, \frac{3a+b}{4}\right)\\
  +&h_2\left(t, \frac{a+3b}{4}\right)+h_2\left(b,
  \frac{a+3b}{4}\right)\Bigg)
\end{split}
\end{equation}
for all $t\in \left[\frac{3a+b}{4}, \frac{a+3b}{4}\right]\cap
\mathbb{T}$.
\end{corollary}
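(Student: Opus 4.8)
The plan is to derive this corollary as an immediate specialization of Theorem \ref{th4}, taking $\lambda = \frac{1}{2}$; no new analytic input is required beyond substituting this value into \eqref{eq4} and simplifying the resulting expressions. First I would set $\lambda = \frac{1}{2}$ throughout the inequality \eqref{eq4}. On the left-hand side, the convex combination $(1-\lambda)f(t) + \lambda\frac{f(a)+f(b)}{2}$ becomes $\frac{1}{2}f(t) + \frac{1}{2}\cdot\frac{f(a)+f(b)}{2} = \frac{1}{2}\left(f(t) + \frac{f(a)+f(b)}{2}\right)$, which matches the first term appearing in \eqref{eq10}, while the integral term $\frac{1}{b-a}\int_a^b f^\sigma(s)\Delta s$ is left unchanged.

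Next I would compute the two shifted endpoints occurring as the second arguments of the $h_2$ terms. With $\lambda = \frac{1}{2}$ one has $a + \lambda\frac{b-a}{2} = a + \frac{b-a}{4} = \frac{3a+b}{4}$ and, by the mirror computation, $b - \lambda\frac{b-a}{2} = b - \frac{b-a}{4} = \frac{a+3b}{4}$. Substituting these two values into the four $h_2$ contributions on the right-hand side of \eqref{eq4} produces exactly $h_2\left(a, \frac{3a+b}{4}\right) + h_2\left(t, \frac{3a+b}{4}\right) + h_2\left(t, \frac{a+3b}{4}\right) + h_2\left(b, \frac{a+3b}{4}\right)$, which is precisely the bound stated in \eqref{eq10}. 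The same endpoint identities convert the admissibility interval $t \in [a+\lambda\frac{b-a}{2}, b-\lambda\frac{b-a}{2}]\cap \T$ into the range $t \in [\frac{3a+b}{4}, \frac{a+3b}{4}]\cap \T$ asserted in the corollary.

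Since each step is a direct substitution, there is no genuine obstacle here: the only thing that must be verified is the elementary arithmetic $\frac{1}{2}\cdot\frac{b-a}{2} = \frac{b-a}{4}$ together with $a + \frac{b-a}{4} = \frac{3a+b}{4}$ and its symmetric counterpart. I would therefore conclude simply by invoking Theorem \ref{th4} for this particular value $\lambda = \frac{1}{2}$, the displayed inequality \eqref{eq10} being its verbatim instance after the above simplifications.
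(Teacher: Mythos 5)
Your proposal is correct and matches the paper's treatment: Corollary \ref{co9} is stated there without a separate proof precisely because it is the verbatim instance of Theorem \ref{th4} at $\lambda=\tfrac12$, with $a+\tfrac12\cdot\tfrac{b-a}{2}=\tfrac{3a+b}{4}$ and $b-\tfrac12\cdot\tfrac{b-a}{2}=\tfrac{a+3b}{4}$ exactly as you compute.
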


\begin{remark}
If we choose $t=\frac{a+b}{2}$ in (\ref{eq10}), we get the
averaged mid-point-trapezoid inequality on time scales
\begin{equation*}
\begin{split}
 \Bigg| \frac{1}{2}\left(\frac{f(a)+f(b)}{2}+f\left(\frac{a+b}{2}\right)\right)&-\frac{1}{b-a}
 \int\limits_a^b f^\sigma(s)\Delta  s \Bigg| \\
  \leq \frac{M}{b-a}\Bigg(&h_2\left(a, \frac{3a+b}{4}\right)+h_2\left(\frac{a+b}{2}, \frac{3a+b}{4}\right)
  \\
+&h_2\left(\frac{a+b}{2}, \frac{a+3b}{4}\right)+h_2\left(b,  \frac{a+3b}{4}\right)\Bigg).
\end{split}
\end{equation*}
\end{remark}

\begin{corollary}\label{pro10}
Under the assumptions of Theorem \ref{th4} with
$t=\frac{a+b}{2}\in \mathbb{T}$. Then we have the following
integral inequality on time scales
\begin{equation}\label{eq11}
\begin{split}
\Bigg|(1-\lambda)f\left(\frac{a+b}{2}\right)&+\lambda\frac{f(a)+f(b)}{2}-\frac{1}{b-a}\int\limits_a^b
f^\sigma(s)\Delta  s\Bigg| \\
\leq \frac{M}{b-a} \Bigg(&h_2 \left( {a, a + \lambda \frac{{b -
a}} {2}} \right) + h_2 \left( {\frac{a+b}{2}, a + \lambda \frac{{b
- a}}
{2}} \right) \\
   + &h_2 \left( {\frac{a+b}{2}, b - \lambda \frac{{b - a}}
{2}} \right) + h_2 \left( {b, b - \lambda \frac{{b - a}} {2}}
\right)\Bigg)
\end{split}
\end{equation}
for all $\lambda\in [0, 1]$.
\end{corollary}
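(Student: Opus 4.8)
The plan is to obtain Corollary~\ref{pro10} as an immediate specialization of Theorem~\ref{th4}: I would set $t = \frac{a+b}{2}$ in the master inequality \eqref{eq4} and read off \eqref{eq11}. The only genuine point to settle is \emph{admissibility} of this choice, namely that $\frac{a+b}{2}$ lies in the range $\left[a + \lambda\frac{b-a}{2},\, b - \lambda\frac{b-a}{2}\right]\cap\T$ required by the hypotheses of Theorem~\ref{th4}, and that it does so for \emph{every} $\lambda\in[0,1]$ rather than only a subinterval.

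First I would verify this range condition. Writing $\frac{a+b}{2} = a + \frac{b-a}{2} = b - \frac{b-a}{2}$, the two endpoint inequalities
\[
a + \lambda\frac{b-a}{2} \leq \frac{a+b}{2} \quad\text{and}\quad \frac{a+b}{2} \leq b - \lambda\frac{b-a}{2}
\]
each collapse to $\lambda\frac{b-a}{2} \leq \frac{b-a}{2}$, i.e.\ to $\lambda \leq 1$, which holds throughout $[0,1]$. Since the corollary already assumes $\frac{a+b}{2}\in\T$, the point $t=\frac{a+b}{2}$ therefore belongs to $\left[a + \lambda\frac{b-a}{2},\, b - \lambda\frac{b-a}{2}\right]\cap\T$ for all admissible $\lambda$.

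Having confirmed admissibility, I would substitute $t=\frac{a+b}{2}$ into both sides of \eqref{eq4}. On the left-hand side the term $(1-\lambda)f(t)$ becomes $(1-\lambda)f\!\left(\frac{a+b}{2}\right)$ while the remaining two terms are unchanged; on the right-hand side the two kernel contributions involving $t$ become $h_2\!\left(\frac{a+b}{2},\, a+\lambda\frac{b-a}{2}\right)$ and $h_2\!\left(\frac{a+b}{2},\, b-\lambda\frac{b-a}{2}\right)$, while the two endpoint terms $h_2\!\left(a,\, a+\lambda\frac{b-a}{2}\right)$ and $h_2\!\left(b,\, b-\lambda\frac{b-a}{2}\right)$ persist. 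This yields exactly \eqref{eq11}.

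Because the argument is purely a substitution into an already-proved inequality, there is no substantive obstacle to overcome; the entire content is the elementary range check in the second paragraph, which is what secures the conclusion for the full parameter interval $\lambda\in[0,1]$.
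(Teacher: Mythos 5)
Your proposal is correct and is exactly the route the paper intends: Corollary~\ref{pro10} is stated in the paper without proof as a direct specialization of Theorem~\ref{th4}, and your substitution $t=\frac{a+b}{2}$ together with the observation that both endpoint inequalities reduce to $\lambda\le 1$ (so the choice is admissible for every $\lambda\in[0,1]$, including the degenerate case $\lambda=1$ where the interval collapses to the single point $\frac{a+b}{2}$) supplies the only detail the paper leaves implicit.
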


\begin{remark}
 If we choose $\lambda=0$ in (\ref{eq11}), we get the mid-point inequality on time scales
\[
 \left|f\left(\frac{a+b}{2}\right)-\frac{1}{b-a}\int\limits_a^b f^\sigma(t)\Delta  t\right|
   \leq \frac{M}{b-a}\Bigg(h_2\left(\frac{a+b}{2}, a\right)+h_2\left(\frac{a+b}{2}, b\right)\Bigg).
\]
\end{remark}

\medskip

\section*{Acknowledgements}

This work was supported by the Science Research Foundation of
Nanjing University of Information Science and Technology and the
Natural Science Foundation of Jiangsu Province Education Department
under Grant No.07KJD510133.

\end{document}